\documentclass[12pt]{amsart}
\usepackage{amssymb, amsmath, amsthm} 
\usepackage{url}      
\usepackage{mathtools} 
\usepackage{tikz-cd}
\usepackage[T1]{fontenc}
\usepackage[utf8]{inputenc}
\usepackage{xcolor}
\usepackage{mathrsfs}
\usepackage{stmaryrd}
\usepackage{eucal}
\usepackage[all]{xy}
\usepackage[margin=1in]{geometry} 
\usepackage{hyperref}
\usepackage[dvipsnames]{xcolor}
\usepackage[shortlabels]{enumitem}
\hypersetup{bookmarksdepth=2}
\hypersetup{colorlinks=true}
\hypersetup{linkcolor=blue}
\hypersetup{citecolor=blue}
\hypersetup{urlcolor=blue}
\allowdisplaybreaks

\theoremstyle{plain}
\newtheorem{theorem}{Theorem}[section] 
\newtheorem{proposition}[theorem]{Proposition} 
\newtheorem{lemma}[theorem]{Lemma}
\newtheorem{corollary}[theorem]{Corollary}

\theoremstyle{definition}
\newtheorem{definition}[theorem]{Definition}

\theoremstyle{remark}
\newtheorem{remark}[theorem]{Remark}
\newtheorem*{acknowledgement}{Acknowledgements}

\newcommand{\g}{\gamma}

\newcommand{\f}{\varphi}

\renewcommand{\k}{\kappa}

\newcommand{\s}{\sigma}

\renewcommand{\t}{\tau}

\newcommand{\G}{\Gamma}

\newcommand{\NN}{\mathbb{N}}

\newcommand{\QQ}{\mathbb{Q}}

\newcommand{\ZZ}{\mathbb{Z}}

\newcommand{\Ann}{\operatorname{Ann}}

\newcommand{\End}{\operatorname{End}}

\newcommand{\grand}{{\textup{grand}}} 
\newcommand{\grano}{\mathcal{O}^{\grand}}

\newcommand{\Hom}{\operatorname{Hom}}
\newcommand{\id}{\operatorname{id}}

\newcommand{\lcm}{\operatorname{lcm}}

\newcommand{\Mat}{\operatorname{Mat}}

\newcommand{\Orbit}{\mathcal{O}}

\newcommand{\Per}{\operatorname{Per}}

\newcommand{\Qbar}{{\bar{\QQ}}}

\renewcommand{\setminus}{\smallsetminus}

\newcommand{\Span}{\operatorname{Span}}

\title{On Dense Orbit Transversality for Endomorphisms of Abelian Varieties}

\author{Kaiwen Lu}
\address{Department of Mathematics, Brown University, Providence, RI, USA}
\email{kaiwen\_lu@brown.edu}

\begin{document}

\begin{abstract}
Let $X/K$ be a smooth projective variety defined over a number field and $f:X\to X$ be a morphism defined over $K$.
Assuming there exists a point in $X(K)$ whose $f$-orbit is Zariski dense in $X$ and up to replacing $K$ by a finite extension,
Pasten and Silverman studied the distribution of grand $(f,K)$-orbits and proved that many sets of representatives of grand $(f,K)$-orbits on various classes of varieties are Zariski dense.
In particular, they showed that if $X$ is a geometrically simple abelian variety, then all such sets of representatives are Zariski dense.
We demonstrate the existence of a dense set of representatives for maps on all abelian varieties.

\end{abstract}

\maketitle

\section{Introduction}

Let $X$ be a variety over a field $K$ and let $f:X\to X$ be a self-map on $X$ defined over $K$.
A central object of study in dynamics is the set of grand $(f,K)$-orbits of points in $X(K)$, as they encode the information of how points move under $f$.
\begin{definition}
    The \emph{grand $(f,K)$-orbit} of a point $P\in X(K)$ is the set of points whose orbits eventually merge with the orbit of $P$:
    \[\grano_{f,K}(P):=\{Q\in X(K)\mid  f^n(Q)=f^m(P) \text{ for some $n,m \ge 0$}\}.\]
\end{definition}
A natural question to ask is then: how are the grand $(f,K)$-orbits distributed in $X$?
We know that the set of grand $(f,K)$-orbits in $X(K)$ partitions $X(K)$, so we need to pin down a notion of distribution before we proceed.
To precisely discuss this problem, Pasten and Silverman \cite{pastensilverman} introduced orbit propagation principles, which measure the distribution of grand orbits by the Zariski density of some sets of representatives.
Using terminologies introduced by Silverman, one set of representatives considered is the following:
\begin{definition}
An \emph{$(f,K)$-transversal of $X$} is a subset $S\subset X(K)$ such that every $(f,K)$-grand orbit of $X(K)$ contains exactly one point in $S$.
\end{definition}
With the above definition, orbit propagation principles (C2$\exists$) and (C2$\forall$) described in \cite{pastensilverman} can be rephrased as follows:
\begin{definition}[Weak DOT]
We say that $(f,K)$ is \emph{weakly dense orbit transversal} (W-DOT) if for some finite extension $L/K$, \textbf{there exists} a Zariski dense $(f,L)$-transversal of $X$.
\end{definition}

\begin{definition}[Strong DOT]
We say that $(f,K)$ is \emph{strongly dense orbit transversal} (S-DOT) if for some finite extension $L/K$, \textbf{every} $(f,L)$-transversal of $X$ is Zariski dense.
\end{definition}

Pasten and Silverman proved orbit propagation principles of various strength for many classes of self-maps on families of projective varieties.
In the case of abelian varieties, they proved strongly dense orbit transversality on simple abelian varieties:
\begin{theorem}[\cite{pastensilverman} Theorem 6.1, rephrased]
Let $X/\Qbar$ be a geometrically simple abelian variety, and let $f:X\to X$ be an endomorphism of $X$ (as an abstract variety) such that there is a point in $X(\Qbar)$ with $f$-orbit that is Zariski dense in $X$.
Then there is a number field~$K/\QQ$ such that~$X$ and~$f$ are defined over~$K$ and $(f,K)$ is strongly dense orbit transversal.
\end{theorem}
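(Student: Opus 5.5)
Write $f=\tau_a\circ\psi$, where $\psi\in\End(X)$ is a group endomorphism and $\tau_a$ is translation by $a\in X(\Qbar)$. Choose a number field $K$ over which $X$, $\psi$, $a$ and a point $P_0$ with Zariski dense $f$-orbit are all defined. Enlarge $K$ so that all of $\End(X)$ is defined over $K$. The strategy is to show that every Zariski closed proper subset $Z\subsetneq X$ misses some grand $(f,K)$-orbit entirely. That is enough: if $S$ is a transversal with $\overline{S}=Z\neq X$, then the grand orbit avoiding $Z$ has no representative in $S$, a contradiction.

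\textbf{Step 1: structure of $\psi$.} Since $X$ is simple, $\End(X)\otimes\QQ$ is a division algebra, so $\psi$ is either $0$ or an isogeny. If $\psi=0$, then $f$ is constant and no orbit is dense. So $\psi$ is an isogeny. A dense orbit also forces $\psi$ to have no root of unity as an eigenvalue on the relevant part. More precisely, if $\psi^n-1$ were not an isogeny for some $n\ge1$, then simplicity gives $\psi^n=1$. Then $f^n$ is a translation, and one has to analyze that case separately; translation by a non-torsion point can still have a dense orbit. So there are two cases: (a) $\psi-1$ is an isogeny, in which case $f$ is conjugate by a translation to $\psi$ itself; (b) $\psi$ is of finite order, and after passing to an iterate and conjugating, $f$ is essentially a translation $\tau_b$ with $b$ non-torsion.

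\textbf{Step 2: describe grand orbits.} In case (b), the grand orbit of $Q$ is $Q+\ZZ b$ (up to the finite group action), so grand orbits are cosets of a rank-one subgroup in $X(K)$. In case (a), with $f=\psi$, the relation $\psi^n(Q)=\psi^m(P)$ shows that the grand orbits are unions of cosets of $\ker\psi^\infty$ intersected with $X(K)$, glued along $\psi$-orbits.

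\textbf{Step 3: find an orbit avoiding $Z$.} By Faltings (Mordell--Lang), $Z(K)$ is a finite union of cosets $x_i+B_i(K)$ with $B_i\subsetneq X$ abelian subvarieties. Simplicity forces $B_i=0$, so $Z(K)$ is finite. After enlarging $K$ we may assume $X(K)$ has positive rank, for instance because the dense orbit point is non-torsion. Each grand orbit meets the finite set $Z(K)$ only if it contains one of finitely many points. But there are infinitely many grand orbits:
\begin{itemize}
\item in case (b), $X(K)/\ZZ b$ is infinite once the rank exceeds $1$ or torsion is added; otherwise use $\psi$-twists;
\item in case (a), $\psi$ acts on $X(K)\otimes\QQ$, and the points $\psi^n(Q)$ have canonical height growing, so the grand orbits of points of small height are distinct.
\end{itemize}
Hence some grand orbit avoids $Z$.

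\textbf{Main obstacle.} The delicate step is showing that there are infinitely many grand orbits in $X(K)$. In the rank-one cases, the grand orbits might exhaust $X(K)$ in finitely many classes. I would handle this by enlarging $K$ to raise the Mordell--Weil rank, or by counting orbits via canonical heights: each grand orbit contains only finitely many points of height below $T$ outside the $\psi$-preperiodic (torsion) ones, while $\#\{Q\in X(K):\hat h(Q)\le T\}$ grows like $T^{r/2}$.
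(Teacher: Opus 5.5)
The paper quotes this result from Pasten--Silverman without proving it, so I am judging your argument on its own terms. Your reduction is correct and is the right one. Since $X$ is simple, Faltings' theorem makes $Z(K)$ finite for every proper closed $Z\subsetneq X$. Hence a non-dense transversal is finite, and S-DOT for $(f,K)$ amounts to $X(K)$ containing infinitely many grand $(f,K)$-orbits.

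Case (b) is essentially fine once stated correctly. The dichotomy should be ``$\psi$ of finite order'' versus ``$\psi$ of infinite order'', not ``$\psi-1$ an isogeny''. If $\psi\neq 1$ has finite order, then $\psi-1$ is a nonzero endomorphism and hence an isogeny. So $f$ is conjugate to the finite-order map $\psi$, and no orbit is dense. Therefore $\psi=1$ and $f=\tau_b$ with $b$ non-torsion. The grand orbits are the cosets of $\ZZ b$, and you need $\operatorname{rank}X(K)\ge 2$. You can get this after a finite extension, using unboundedness of Mordell--Weil ranks; the paper relies on the same kind of input when it chooses points with independent coordinates. Your bullet's ``or torsion is added'' is wrong: in rank $1$, $X(K)/\ZZ b$ is finite whatever the torsion. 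There are also no ``$\psi$-twists'' available, since $\psi=1$.

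The genuine gap is case (a), which is exactly the step you flag as the main obstacle and then leave open. The claim that ``$\psi^n(Q)$ has canonical height growing, so the grand orbits of points of small height are distinct'' is false in general. The map $\psi$ can be an automorphism with a conjugate of absolute value $<1$, e.g.\ the unit $1+\sqrt2$ on a simple abelian surface with real multiplication by $\ZZ[\sqrt2]$. Then $\hat h(\psi Q)<\hat h(Q)$ for suitable $Q$, so one orbit can contain several points of small height. Even eventual growth would not give distinctness: $Q$ and $2Q$ are both small for $\psi=[2]$.

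Your fallback, that each grand orbit has only finitely many points of height $\le T$, holds for every subset of $X(K)$ by Northcott and gives nothing. The counting argument needs a per-orbit growth bound such as $\#\{Q\in\Gamma:\hat h(Q)\le T\}=O(\log T)$. That bound must be derived from two facts: modulo the finite torsion subgroup, $\Gamma$ lies in a two-sided orbit $\{\psi^k v\}_{k\in\ZZ}$ in the $\QQ(\psi)$-vector space $X(K)\otimes\QQ$; and Kronecker's theorem.

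A much shorter fix is the argument of the first case in the proof of Theorem~\ref{main}.
\begin{enumerate}
\item Conjugate $f$ to $\psi$ and take any non-torsion $P\in X(K)$. Simplicity gives $\Ann(P)=0$ in $\End(X)$ (Lemma~\ref{indcoord} with $n=1$).
\item If $cP$ and $dP$ with $c\ne d$ lie in one grand orbit, then $c\psi^a=d\psi^b$ in $\End(X)$. This forces $\psi^j=[\kappa]$ for some $j\ge1$ and some integer $|\kappa|\ge2$.
\item In that case the points $mP$ with $\gcd(m,\kappa)=1$ are in pairwise distinct grand orbits. Indeed, a second relation $\psi^{j'}=[\kappa']$ with $\kappa'=m/m'$ or $m'/m$ would give $\kappa^{j'}=\kappa'^{\,j}$. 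A prime dividing $\kappa'$ would then divide both $\kappa$ and one of $m,m'$, a contradiction.
\item If no such $c\ne d$ exist, all multiples of $P$ already lie in distinct grand orbits.
\end{enumerate}
Either way there are infinitely many grand orbits, and your reduction finishes the proof.
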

The main result of this paper is weakly dense orbit transversality in a more general setting.
Namely, we demonstrate the existence of a dense $(f,K)$-transversal for all abelian varieties, not just geometrically simple ones.
\begin{theorem}\label{main}
Let $X/\Qbar$ be an abelian variety, and let $f:X\to X$ be an endomorphism of~$X$ (as an abstract variety) such that there is a point in $X(\Qbar)$ with $f$-orbit that is Zariski dense in $X$.
Then there is a number field~$K/\QQ$ such that~$X$ and~$f$ are defined over~$K$, and $(f,K)$ is weakly dense orbit transversal.
\end{theorem}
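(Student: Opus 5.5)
The plan is to build a dense transversal by hand, one point at a time, and reduce everything to a single counting statement about grand orbits inside the Mordell–Weil group.

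\textbf{Reduction.} First I will fix a number field $K_0$ over which $X$, $f$ and a point with Zariski dense $f$-orbit are all defined. The proper closed subvarieties $Z\subsetneq X$ defined over $\Qbar$ form a countable set; enumerate them as $Z_1,Z_2,\dots$. Suppose that, for a suitable finite extension $L/K_0$, we have the following property:
\[(\ast)\quad \text{for every proper closed } Z\subsetneq X,\ X(L)\setminus Z \text{ is not contained in a finite union of grand } (f,L)\text{-orbits}.\]
Then I choose points inductively. Given $P_1,\dots,P_{i-1}$, pick $P_i\in X(L)\setminus Z_i$ whose grand orbit differs from those of $P_1,\dots,P_{i-1}$. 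Finally, extend $\{P_i\}$ to a full $(f,L)$-transversal $S$ by choosing one point in each remaining grand orbit. Then $S$ is not contained in any $Z_i$, so it is Zariski dense, which gives W-DOT. So everything comes down to proving $(\ast)$ after enlarging the field.

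\textbf{Structure.} Write $f=\tau_c\circ\varphi$, where $\varphi\in\End(X)$ and $\tau_c$ is translation by $c=f(0)$. Then $\Gamma=X(L)$ is a finitely generated group with $\varphi(\Gamma)\subset\Gamma$ and $c\in\Gamma$. Since a point with dense orbit is defined over $K_0$, $X(L)$ is Zariski dense in $X$. For a proper closed $Z$, I will use the structure of $Z\cap\Gamma$ coming from Faltings' theorem (Mordell–Lang): it is a finite union of cosets $\gamma_j+B_j(L)$ with $B_j\subsetneq X$ abelian subvarieties. Hence $\Gamma\setminus Z$ contains the complement in $\Gamma$ of finitely many cosets of subgroups $\Gamma\cap B_j$ of strictly smaller rank, or of infinite index. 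In particular, the canonical-height counting function of $\Gamma\setminus Z$ still grows like $\#\{P\in\Gamma:\hat h(P)\le T\}\asymp T^{r/2}$, where $r=\operatorname{rank}\Gamma$.

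\textbf{Main obstacle.} The key step is to show that a single grand orbit is "thin" in $\Gamma$, so that finitely many of them cannot exhaust $\Gamma\setminus Z$. The example $f=\tau_c$ shows this genuinely needs an enlargement of the field. For a translation, a grand orbit is $P+\ZZ c$, and if $\Gamma=\ZZ c\oplus\text{torsion}$ then $\Gamma$ is a finite union of grand orbits. So I will first replace $K_0$ by a finite extension $L$ with $\operatorname{rank}X(L)$ large; this is possible since ranks of abelian varieties over number fields are unbounded in suitable extensions, for instance by adjoining coordinates of independent points. I will then bound $\#\{Q\in\grano_{f,L}(P):\hat h(Q)\le T\}$, using $\Gamma\otimes\mathbb{R}$ with the quadratic form $\hat h$.
\begin{itemize}
\item A point $Q$ with $f^n(Q)=f^m(P)$ is determined by $n$, $m$ and an element of $\ker\varphi^n\cap\Gamma$, which is bounded by $\#\Gamma_{\mathrm{tors}}$.
\item Iterates $f^m(P)=\varphi^m(P)+\sum_{k<m}\varphi^k(c)$ either have heights growing at least quadratically in $m$ (when $\varphi$ is unipotent-like on the relevant part), or exponentially.
\item Preimages of large height can be ignored, and preimages of bounded height come in boundedly many per level.
\end{itemize}
This should give a bound of the form $O(T^{1/2}\log T)$, or at worst $O(T^{d/2})$ with $d$ independent of $L$. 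Choosing $r>d$ then shows that finitely many grand orbits have density zero in $\Gamma\setminus Z$, which proves $(\ast)$. I expect this uniform count, especially handling the interaction of the eigenvalues of $\varphi$ of absolute value $1$ with the translation part, to be the hardest step. I would attack it by splitting $X$, up to isogeny, into the part where $\varphi-1$ is nilpotent and the part where it is an isogeny, conjugating the translation away on the latter.
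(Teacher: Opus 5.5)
Your overall route is viable and genuinely different from the paper's. The step that carries all the weight is left as a conjecture, though, and the sketch you give for it does not work as stated.

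\textbf{What is already correct.} Diagonalizing over the countably many proper closed $Z$ does show that $(\ast)$ yields a dense transversal. Faltings' theorem together with lattice-point counting does give $\#\{Q\in\Gamma\setminus Z:\hat h(Q)\le T\}\gg T^{r/2}$: since $\overline{\Gamma}=X$ is connected, every coset contained in $Z\cap\Gamma$ comes from a subgroup of rank $<r$.

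\textbf{The gap.} What is missing is a proof that a single grand orbit is uniformly thin, and your third bullet fails as written. Take $f=\tau_c$: the point $P-nc$ first enters $f^{-n}(\Orbit_f(P))$ at level $n$, so about $T^{1/2}$ levels contribute points of height $\le T$. ``Boundedly many per level'' therefore gives nothing without a bound on the number of contributing levels, which is a growth estimate for backward orbits. Counting pairs $(n,m)$ instead overcounts, because $f^{-n}(f^mP)\subseteq f^{-n-1}(f^{m+1}P)$.

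\textbf{One way to close it.}
Since $\ker\varphi$ is finite, the induced affine map $\bar f$ on $\Lambda=\Gamma/\Gamma_{\mathrm{tors}}$ is injective. Hence a grand orbit maps onto a single two-sided orbit $(x_k)_{k\ge k_0}$ with $\bar f(x_k)=x_{k+1}$; this amounts to indexing by $k=m-n$. Each fibre has at most $\#\Gamma_{\mathrm{tors}}$ points, all of the same height, and preperiodic grand orbits are finite.

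Pass to an iterate so that $t-1$ is the only cyclotomic factor of the minimal polynomial of $\varphi$; this is harmless because a grand $f$-orbit is a union of at most $N$ grand $f^N$-orbits. Then split $\Lambda\otimes\QQ$ along the irreducible factors $p$.
\begin{enumerate}
\item On the $(t-1)$-part, $x_k$ is a vector-valued polynomial in $k$, nonconstant unless that component is fixed.
\item On every other part, conjugate the translation away. A nonzero rational vector there has nonzero component in the generalized eigenspace of every root of $p$.
\item Each such $p$ has a root of modulus $>1$ by Kronecker, since $p(0)\neq 0$. If $p$ has a root on the unit circle, it is self-reciprocal, so it also has a root of modulus $<1$.
\end{enumerate}
So the modulus-one eigenvalues you worried about are harmless. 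For non-preperiodic $P$, $\|x_k\|$ grows at least linearly as $k\to+\infty$. As $k\to-\infty$, either $\|x_k\|$ grows at least linearly, or $x_k$ converges, in which case the backward orbit in the lattice $\Lambda$ is finite. Thus each grand orbit has $O(T^{1/2})$ points of height $\le T$, and $\operatorname{rank}X(L)\ge 2$ suffices.

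\textbf{Comparison with the paper.} The paper never counts heights. It reduces via Lemmas~\ref{isoginv} and \ref{splitmap} to a product of isotypic components. It then uses Goursat's lemma and a point $P$ with $\End(B_i)$-independent coordinates to make $\ZZ P$ Zariski dense. Faltings' theorem (Lemma~\ref{cyclicsubset}) makes every infinite subset of $\ZZ P$ dense, so it only needs infinitely many multiples of $P$ in distinct grand orbits. It gets these either by forcing an iterate of $f$ to be $[\k]$ on a non-unipotent factor, or from the at-most-$2$-to-$1$ Proposition~\ref{unipq} in the unipotent case.

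Completed, your route avoids the isotypic reduction and the special choice of $P$. It yields a quantitative statement: finitely many grand orbits have height-density zero in $X(L)\setminus Z$. It also seems to use the dense orbit only to ensure that $X(L)$ is dense and $\varphi$ is an isogeny. The price is canonical heights and the two-sided growth analysis above.
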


As another motivation for the defintion of $(f,K)$-transversals, Pasten and Silverman proved that over a number field $K$ and up to replacing $K$ with a finite extension, if $X(K)$ is Zariski dense in $X$, then the existence of a dense $(f,K)$-transversal is equivalent to $X(K)\setminus(\Gamma_1\cup\cdots \cup \Gamma_r)$ being Zariski dense for every finite collection of grand $(f,K)$-orbits $\Gamma_1,\ldots,\Gamma_r$.
This gives more evidence for why the Zariski density of an $(f,K)$-transversal is a reasonable notion for measuring the distribution of grand $(f,K)$-orbits.

One thing to note is that their proof for the aforementioned equivalence crucially relies on the fact that $K$ is countable. 
In fact, if $K$ is uncountable, then we will prove that strongly dense orbit transversality always holds for dominant morphisms of irreducible varieties; see Proposition \ref{uncountablefield}.

Henceforth we assume that $K$ is a number field, and we may abuse notation when replacing $K$ with a finite extension and still call the extension $K$.

\begin{acknowledgement}
    We would like to thank Joseph Silverman for many helpful discussions and comments and Dan Abramovich for the second example in Remark \ref{genus2}.
\end{acknowledgement}

\section{Reduction to simpler cases}
In this section, we prove several invariance results for the dense orbit transversal properties.
Then we construct lifts of self-maps on abelian varieties through isogenies, which combined with the invariance results allow us to reduce the proof of the main theorem for general abelian varieties to some abelian varieties with a simpler structure.

When we can lift $f$ through an isogeny to $f'$, it's natural to think that the dynamics of $f'$ is  finitely many copies of the dynamics of the dynamics of $f$.
This intuition is not far from being correct, as the lift of a grand $(f,K)$-orbit is a finite union of grand $(f',K)$-orbits of the preimages, which we prove in Lemma \ref{unionorbits}. 
This gives us a handle on analyzing the dynamics of $f$ through its lifts and descents.

Let $X,X'$ be abelian varieties over $K$, $\f:X'\to X$ an isogeny, and $f,f'$ self-maps on $X$ and $X'$ respectively such that the following diagram commutes:
\begin{equation}\label{isogliftdiag}
        \begin{tikzcd}
        X' \arrow[r, "f'"] \arrow[d, "\varphi"] & X' \arrow[d, "\varphi"] \\
        X \arrow[r, "f"]                                   & X                                 
\end{tikzcd}
\end{equation}
    By the rigidity theorem~\cite[Section~4, Corollary~1]{mumfordabvar}, we can decompose each of $f$ and $f'$ into a homomorphism followed by a translation:
    \[
        f(x)=\t(x)+c,\quad f'(x)=\t'(x)+c',
    \]
    where $\f(c')=c$.
Note that the commutativity of diagram (\ref{isogliftdiag}) implies that $\t'(\ker(\f))\subseteq \ker(\f)$, because for any $\k\in \ker(\f)$, we have 
    \[\f(\t'(\k))=\f(f'(\k)-c')=f(\f(\k))-\f(c')=f(0)-\f(c')=c-c=0.\]
    Since $\ker(\f)$ is finite,  for $N$ sufficiently large, the iterated image $\t'^N(\ker(\f))=\t'^{N+1}(\ker(\f))$ stabilizes,
    and $\t'$ acts on \[\ker(\f)_{\Per}:=\t'^N(\ker(\f))\]
    periodically.
\begin{lemma}\label{unionorbits}

    With the above notation,
    let $P\in X(K)$ be a point and $P'\in \f^{-1}(P)$, then the preimage of $\grano_{f,K}(P)$ under $\f$ is a union of the grand $(f',K)$-orbits of the preimages of $P$.
    In other words:
    \[\f^{-1}(\grano_{f,K}(P))=\bigcup_{\k\in \ker(\f)}\grano_{f',K}(P'+\k).\]

    If $P'$ is not $f'$-preperiodic, then
    the preimage of $\grano_{f,K}(P)$ under $\f$ is the disjoint union of exactly $\#\ker(\f)_{\Per}$ grand $(f',K)$-orbits indexed by points in $\ker(\f)_{\Per}$:
    \[\f^{-1}(\grano_{f,K}(P))=\bigsqcup_{\k\in \ker(\f)_{\Per}}\grano_{f',K}(P'+\k).\]
\end{lemma}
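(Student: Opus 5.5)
The plan is to verify both inclusions of the set equality directly, using $\f\circ f'=f\circ\f$ together with the decomposition $f'(x)=\t'(x)+c'$. Throughout I take $P'$ and the points of $\ker(\f)$ to be $K$-rational, enlarging $K$ if necessary. I read $\f^{-1}(\grano_{f,K}(P))$ as its set of $K$-rational points, so that every point $P'+\k$ lies in $X'(K)$. The key identity, proved by induction on $m$ using that $\t'$ is a homomorphism, is
\[
f'^m(x+\k)=f'^m(x)+\t'^m(\k)\qquad\text{for all } x\in X' \text{ and } \k\in\ker(\f).
\]

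For the inclusion $\supseteq$, suppose $f'^n(Q')=f'^m(P'+\k)$. Applying $\f$ gives $f^n(\f(Q'))=f^m(P)$, since $\f(P'+\k)=P$. Hence $\f(Q')\in\grano_{f,K}(P)$.

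For $\subseteq$, I will prove the stronger statement with the union taken only over $\ker(\f)_{\Per}$. This gives both displayed equalities at once, because $\ker(\f)_{\Per}\subseteq\ker(\f)$ and the inclusion $\supseteq$ holds for the full union.
\begin{enumerate}[(i)]
\item Let $Q'\in X'(K)$ with $f^n(\f(Q'))=f^m(P)$. Then $\f\bigl(f'^n(Q')-f'^m(P')\bigr)=0$, so $f'^n(Q')=f'^m(P')+\k_0$ for some $\k_0\in\ker(\f)$.
\item The difficulty is that $\k_0$ need not lie in the image of $\t'^m$ on $\ker(\f)$. To handle this, apply $f'^N$ with $N$ the stabilization index: by the key identity, $f'^{n+N}(Q')=f'^{m+N}(P')+\t'^N(\k_0)$, and $\t'^N(\k_0)\in\ker(\f)_{\Per}$.
\item Since $\ker(\f)$ is finite and $\t'(\ker(\f)_{\Per})=\ker(\f)_{\Per}$, the map $\t'$ restricted to $\ker(\f)_{\Per}$ is a bijection. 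So there is $\k_1\in\ker(\f)_{\Per}$ with $\t'^{m+N}(\k_1)=\t'^N(\k_0)$.
\item Then $f'^{n+N}(Q')=f'^{m+N}(P'+\k_1)$, so $Q'\in\grano_{f',K}(P'+\k_1)$.
\end{enumerate}

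For disjointness, assume $P'$ is not $f'$-preperiodic. Then $P$ is not $f$-preperiodic: otherwise the forward orbit of $P'$ would lie in the preimage under $\f$ of a finite set, which is finite because $\f$ has finite fibers. Suppose $\k_1,\k_2\in\ker(\f)_{\Per}$ give the same grand orbit, say $f'^a(P'+\k_1)=f'^b(P'+\k_2)$.
\begin{enumerate}[(i)]
\item Applying $\f$ gives $f^a(P)=f^b(P)$, so $a=b$ because $P$ is not preperiodic.
\item The key identity then gives $\t'^a(\k_1)=\t'^a(\k_2)$.
\item Injectivity of $\t'$ on $\ker(\f)_{\Per}$ forces $\k_1=\k_2$.
\end{enumerate}
So the union is disjoint, and the number of grand orbits is exactly $\#\ker(\f)_{\Per}$.

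The main obstacle is the $\subseteq$ direction, specifically the mismatch between $\k_0$ and the image $\t'^m(\ker(\f))$. It is resolved by pushing forward $N$ further steps into the periodic part of $\ker(\f)$, where $\t'$ is invertible. Otherwise the work is bookkeeping of iterate indices, plus care about $K$-rationality of the points $P'+\k$.
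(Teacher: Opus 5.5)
Your proof is correct. The key identity and both inclusions match the paper's argument step for step. This includes the device of applying $f'^N$ so that $\t'^N(\k_0)$ lands in $\ker(\f)_{\Per}$, where $\t'$ is bijective.

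Where you take a different route is the disjointness part.

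\begin{itemize}
\item \emph{The paper's argument.} It stays entirely on $X'$. It first treats the equal-index case $f'^n(P'+\k)=f'^n(P'+\k')$ using injectivity of $\t'$ on $\ker(\f)_{\Per}$. In the unequal-index case it derives $f'^n(P')=f'^m(P')+\bar{\k}$ with $\bar{\k}\in\ker(\f)_{\Per}$. It then proves by induction that $f'^{n+ka}(P')=f'^n(P')-\sum_{i=0}^{k-1}\t'^{ia}(\bar{\k})$, which takes only finitely many values, so $P'$ would be preperiodic.
\item \emph{Your argument.} You first observe that $P$ cannot be $f$-preperiodic, because $\f$ has finite fibers. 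Projecting the relation $f'^a(P'+\k_1)=f'^b(P'+\k_2)$ down gives $f^a(P)=f^b(P)$, which forces $a=b$ immediately. This reduces everything to the equal-index case.
\end{itemize}

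Your route is shorter and avoids the explicit summation and induction. The paper's computation is in effect a hand-made proof, inside $X'$, of the same transfer of preperiodicity along $\f$ that you invoke in one line.

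You also make explicit two points the paper uses only tacitly: that the points $P'+\k$ are $K$-rational, and that $\f^{-1}(\grano_{f,K}(P))$ is read on $K$-points. Without these, the grand orbits $\grano_{f',K}(P'+\k)$ and the count $\#\ker(\f)_{\Per}$ are not well posed under the paper's definition.
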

\begin{proof}

    We first show the inclusion $\bigcup_{\k\in \ker(\f)}\grano_{f',K}(P'+\k)\subseteq \f^{-1}(\grano_{f,K}(P))$.
    
    Let $y\in \bigcup_{\k\in \ker(\f)}\grano_{f',K}(P'+\k)$, then there exists $n,m\in \ZZ$ such that \[f'^n(y)=f'^m(P'+\k).\]
    By commutativity of the diagram, we have 
    \[f^n(\f(y))=f^m(\f(P'+\k))=f^m(P).\]
    Therefore $\f(y)\in \grano_{f,K}(P)$, and $y\in \f^{-1}(\grano_{f,K}(P))$ as desired.

    We next show the inclusion $\f^{-1}(\grano_{f,K}(P))\subseteq\bigcup_{\k\in \ker(\f)}\grano_{f',K}(P'+\k)$.
    
    Let $y\in \f^{-1}(\grano_{f,K}(P))$, then by definition of the grand orbit, there exists $n,m\in \ZZ$ such that 
    \[f^n(\f(y))=f^m(P)=f^m(\f(P')).\]
    By commutativity of the diagram, we have 
    \[\f(f'^n(y))=\f(f'^m(P')),\]
    which implies that, for some $\k'\in \ker(\f)$, we have
    \begin{equation}\label{lifteq}
        f'^n(y)=f'^m(P')+\k'.
    \end{equation}
    Since $f'$ is a regular map, it decomposes as the composition of a homomorphism $\t'$ followed by a translation by $c'$ by the rigidity theorem:
    \[f'(x)=\t'(x)+c'.\]
    We claim that for all $j\ge 0$, the following equation holds:
    \begin{equation}\label{compute}
        f'^j(x) = \t'^j(x) + \sum_{i=0}^{j-1} \t'^i(c').
    \end{equation}
    We will proceed by induction on $j$.
    The $j=0$ case is $x=x$.
    Now assume equation (\ref{compute}) holds for $j-1$, then
    \begin{align*}
        f'^j(x) =& f'(f'^{j-1}(x))\\
        =& f'(\t'^{j-1}(x) + \sum_{i=0}^{j-2} \t'^i(c'))\\
        =& \t'(\t'^{j-1}(x) + \sum_{i=0}^{j-2} \t'^i(c')) +c'\\
        =& \t'^{j}(x) + \left(\sum_{i=1}^{j-1} \t'^i(c')\right) +c'\\
        =& \t'^j(x) + \sum_{i=0}^{j-1} \t'^i(c').
    \end{align*}
    Hence for any two points $x,a\in X'$, we find
    \begin{equation}\label{fnxa}
        f'^n(x+a)=f'^n(x)+\t'^n(a)
    \end{equation} by the following calculation using equation (\ref{compute}):
    \begin{align*}
        f'^n(x+a)-f'^n(x)=& \t'^n(x+a)+\sum_{i=0}^{n-1} \t'^i(c)-(\t'^n(x)+\sum_{i=0}^{n-1} \t'^i(c))\\
        =&\t'^n((x+a)-(x))\\
        =&\t'^n(a).
    \end{align*}
    Previously we observed that $\t'$ acts periodically on the subgroup $\ker(\f)_{\Per}$.
    In particular, the map $\t'$ is an automorphism on $\ker(\f)_{\Per}$.
    Now we apply $f'^N$ to both sides of equation (\ref{lifteq}) and get 
    \[f'^{n+N}(y)=f'^{m+N}(P')+\t'^N(\k').\]
    Note that $\t'^N(\k')\in \ker(\f)_{\Per}$, so there exists $\k\in \ker(\f)_{\Per}$ such that $\t'^{m+N}(\k)=\t'^N(\k')$, and we find 
    \begin{align*}
        f'^{n+N}(y)=&f'^{m+N}(P')+\t'^N(\k')\\
        =&f'^{m+N}(P')+\t'^{m+N}(\k)\\
        =&f'^{m+N}(P'+\k).
    \end{align*}
    Therefore $y\in \bigcup_{\k\in \ker(\f)}\grano_{f',K}(P'+\k)$, proving the desired containment.

    As $\k\in \ker(\f)_{\Per}$, the argument above in fact shows that \[\f^{-1}(\grano_{f,K}(P))=\bigcup_{\k\in \ker(\f)_{\Per}}\grano_{f',K}(P'+\k),\]
    so we are left to show that if $P'$ not $f'$-preperiodic, then $\grano_{f',K}(P'+\k)\cap \grano_{f',K}(P'+\k')=\varnothing$ for $\k\ne \k'$ in $\ker(\f)_{\Per}$.

    Let $\k\ne \k'$ in $\ker(\f)_{\Per}$. 
    Suppose that $f'^n(P'+\k')=f'^n(P'+\k)$ for some $n$, then 
    \[f'^n(P')+\t'^n(\k')=f'^n(P')+\t'^n(\k),\]
    which implies that $\t'^n(\k')=\t'^n(\k)$ and contradicts the assumption that $\k\ne \k'$, because $\t'$ is an automorphism on $\ker(\f)_{\Per}$.
    
    Now suppose there exists $m>n$ such that $f'^m(P'+\k')=f'^n(P'+\k)$, then \[f'^m(P')+\t'^m(\k')=f'^n(P')+\t'^n(\k).\]
    Write $m-n=a$ and $\t'^m(\k')-\t'^n(\k)=\bar{\k}\in \ker(\f)_{\Per}$, then we find 
    $f'^n(P')=f'^m(P')+\bar{\k}$, which is the base case $k=1$ for the identity 
    \[f'^{n+ka}(P')=f'^{n}(P')-\sum_{i=0}^{k-1} \t'^{ia}(\bar{\k}).\]
    For the inductive step, assume the identity is true for $k-1$, we find that
    \begin{align*}
        f'^{n+ka}(P')&=f'^{a}(f'^{n+(k-1)a}(P'))\\
        &=f'^{a}\left(f'^n(P')-\left(\sum_{i=0}^{k-2} \t'^{ia}(\bar{\k})\right)\right)\\
        &=f'^{n+a}(P')-\t'^a\left(\sum_{i=0}^{k-2} \t'^{ia}(\bar{\k})\right)\\
        &=f'^{n}(P')-\bar{\k}-\left(\sum_{i=1}^{k-1} \t'^{ia}(\bar{\k})\right)\\
        &=f'^{n}(P')-\sum_{i=0}^{k-1} \t'^{ia}(\bar{\k}).
    \end{align*}
    Since $\ker(\f)_{\Per}$ is a finite group, we get that $\sum_{i=0}^{k-1} \t'^{ia}(\bar{\k})$ takes on finitely many values as we vary $k$, so $P'$ is preperiodic.
\end{proof}
We denote the $f$-orbit of a point $P\in X(K)$ as
\[\Orbit_f(P):=\{f^n(P)\mid n\ge 0\}.\]

The existence of a point with dense orbit, weakly dense orbit transversality, and strongly dense orbit transversality are all invariant under isogenies and taking iterates of the self-map $f$.
We will describe the precise sense of invariance in Lemmas \ref{isoginv} and \ref{iterinv}.
\begin{lemma}\label{isoginv}
Let $K/\mathbb{Q}$ be a number field, $X/K$ and $X'/K$ abelian varieties, and $\varphi:X'\to X$ an isogeny.
Let $f:X\to X$ and $f':X'\to X'$ be regular self-maps such that the following diagram commutes:
    \begin{equation}\label{isoginvdiag}
        \begin{tikzcd}
        X' \arrow[r, "f'"] \arrow[d, "\varphi"] & X' \arrow[d, "\varphi"] \\
        X \arrow[r, "f"]                                   & X                                 
\end{tikzcd}
    \end{equation}
    Then the following hold:
    \begin{enumerate}[label=\emph{(\alph*)}]
        \item There is a point $P\in X$ with $\overline{\mathcal{O}_f(P)}=X$ if and only if there is a point $P'\in X'$ with $\overline{\mathcal{O}_{f'}(P')}=X'$ .
        \item There exists a dense $(f,K)$-transversal in $X(K)$ if and only if there exists a dense $(f',K)$-transversal in $X'(K)$.
        \item Every $(f,K)$-transversal in $X(K)$ is dense in $X$ if and only if every $(f',K)$-transversal in $X'(K)$ is dense in $X'$.
    \end{enumerate}
\end{lemma}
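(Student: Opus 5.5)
The plan is to treat the three parts separately. In each case we push sets forward along $\f$, which is surjective, finite and closed, and pull sets back along $\f$, using Lemma~\ref{unionorbits} to compare grand orbits upstairs and downstairs.

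\textbf{Part (a).} Suppose first that $\overline{\Orbit_{f'}(P')}=X'$. By commutativity of (\ref{isoginvdiag}) we have $\f(\Orbit_{f'}(P'))=\Orbit_f(\f(P'))$. Since $\f$ is continuous and surjective, $\f(\overline{A})\subseteq\overline{\f(A)}$, so $X=\f(X')\subseteq\overline{\Orbit_f(\f(P'))}$. Hence $\f(P')$ has dense orbit.

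Conversely, let $\overline{\Orbit_f(P)}=X$ and pick any $P'\in\f^{-1}(P)$. Put $Z=\overline{\Orbit_{f'}(P')}$. Since $\f$ is finite it is closed, so $\f(Z)$ is closed and contains $\Orbit_f(P)$. Therefore $\f(Z)=X$, and as $\f$ is finite this forces $\dim Z=\dim X=\dim X'$. Some irreducible component of $Z$ then has full dimension in the irreducible variety $X'$, so $Z=X'$.

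\textbf{Part (b), from $X'$ to $X$.} Let $S'\subseteq X'(K)$ be a dense $(f',K)$-transversal. By Lemma~\ref{unionorbits}, the $K$-points of $\f^{-1}(\grano_{f,K}(Q))$ form a union of at most $\#\ker(\f)$ grand $(f',K)$-orbits. Hence each grand $(f,K)$-orbit contains the image of at most $\#\ker(\f)$ points of $S'$.

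We therefore split $S'=S'_1\cup\cdots\cup S'_r$ with $r\le\#\ker(\f)$, choosing the pieces so that $\f$ sends each $S'_i$ into distinct grand $(f,K)$-orbits. Since $X'$ is irreducible and $X'=\bigcup_i\overline{S'_i}$, some $S'_i$ is dense. Then $\f(S'_i)$ is dense in $X$ and meets each grand $(f,K)$-orbit at most once. Finally, add one arbitrary representative for every grand $(f,K)$-orbit not yet met. The resulting superset is still dense and is a transversal.

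\textbf{Part (b), from $X$ to $X'$.} Let $S$ be a dense $(f,K)$-transversal. For each grand $(f',K)$-orbit $\Gamma'$, we would choose its representative in $\f^{-1}(S)\cap X'(K)$ whenever that is possible, and arbitrarily otherwise. Density would then follow if $\f^{-1}(S)\cap X'(K)$ is dense, which holds if $S\cap\f(X'(K))$ is dense in $X$.

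This is the step I expect to be the main obstacle, because the representatives in $S$ need not lie in $\f(X'(K))$. My first attempt uses weak Mordell--Weil: $X(K)/\f(X'(K))$ is finite, so $S$ is a finite union of its intersections with cosets $Q_j+\f(X'(K))$, and by irreducibility one of these intersections is dense. The commutativity of (\ref{isoginvdiag}) shows that $f$ permutes these cosets compatibly with $\t$. I would then try to exploit this: either modify $S$ inside grand orbits so that it moves into a single coset while staying dense, or work with a translated isogeny $x\mapsto\f(x)+Q_j$ together with the correspondingly conjugated self-map. Failing that, I would use the paper's convention that $K$ may be replaced by a finite extension, and argue with points liftable over that extension.

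\textbf{Part (c).} We argue by contraposition in both directions, using the same constructions. A non-dense $(f',K)$-transversal $S'$ pushes forward, after discarding duplicates and adding representatives, to a transversal whose closure is contained in $\f(\overline{S'})$ together with the added points. We would choose the added points inside a fixed proper closed subset whenever the grand orbits permit this, and then verify that the resulting transversal is still not dense. Conversely, a non-dense $S$ lifts to $\f^{-1}(S)$, which is contained in the proper closed set $\f^{-1}(\overline{S})$. We then choose representatives of the orbits not met inside this set, again handling the non-liftable orbits as in the obstacle of part (b).
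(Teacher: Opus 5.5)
Your part (a) and the direction $X'\to X$ of part (b) are correct and follow the paper's arguments. You partition $S'$ rather than $\varphi(S')$, which amounts to the same thing. The remaining parts are not proved.

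In (b), direction $X\to X'$, you rightly observe that a point of $S$ may have no $K$-rational preimage, but none of your remedies is carried out, and the first two do not work as described. Since $c=\varphi(c')\in\varphi(X'(K))$ and $\tau\varphi=\varphi\tau'$, the map $f$ acts on the finite group $X(K)/\varphi(X'(K))$ through the induced map $\bar\tau$. When $\bar\tau$ is injective, a grand $(f,K)$-orbit containing a point outside $\varphi(X'(K))$ is disjoint from $\varphi(X'(K))$. So moving representatives along grand orbits cannot bring them into $\varphi(X'(K))$. For example, take $E(K)=\ZZ P_0$, $\varphi=[2]$ and $f=f'=[3]$: the grand orbit of $P_0$ is $\{3^jP_0: j\ge 0\}$, which misses $2\ZZ P_0$.

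If instead you lift a dense piece lying in another coset $Q_j+\varphi(X'(K))$ via $x\mapsto\varphi(x)+Q_j$, the self-map of $X'$ compatible with $f$ must be $x\mapsto\tau'(x)+c'+t$ with $\varphi(t)=(\tau-1)(Q_j)$. This map need not be conjugate to $f'$ over $K$, so it tells you nothing about $(f',K)$-transversals.

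The extension-of-scalars remedy does work, but it proves less. By weak Mordell--Weil there is a finite $L/K$ with $X(K)\subseteq\varphi(X'(L))$. Lifts of the points of $S$ to $X'(L)$ are dense and pairwise $f'$-inequivalent, so you obtain a dense $(f',L)$-transversal. That is weak DOT for $f'$, not the fixed-$K$ equivalence stated in (b). Note that the paper does not overcome your obstacle either: it picks $s'\in\varphi^{-1}(s)$ and completes $\{s'\}$ to a transversal of $X'(K)$ without checking that $s'\in X'(K)$. Only (a) and the $X'\to X$ half of (b) are used in the proof of the main theorem.

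For (c), your sketch presupposes what must be shown. ``Choose the added points inside a fixed proper closed subset whenever the grand orbits permit this'' is the whole content of the claim, and nothing in your argument controls where the missing orbits can be represented. The paper's idea is to add no points at all. It argues that $\varphi(S')$ meets every grand $(f,K)$-orbit and that $\varphi^{-1}(S)$ meets every grand $(f',K)$-orbit. It then extracts a sub-transversal, which inherits non-density. For the pullback it uses the device from Lemma~\ref{unionorbits}: if $f'^n(Q_0)=f'^m(P')+\kappa'$ with $\varphi(Q_0)=Q\in S$, choose $\kappa\in\ker(\varphi)_{\Per}$ with $\tau'^{n+N}(\kappa)=-\tau'^N(\kappa')$, so that $f'^{n+N}(Q_0+\kappa)=f'^{m+N}(P')$.

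This is the idea you are missing, but it runs into exactly your rationality problem. The point $Q_0+\kappa$ need not lie in $X'(K)$, and by the example above a grand $(f,K)$-orbit need not meet $\varphi(X'(K))$. For instance, with $E(K)=\ZZ P_0$ and $\varphi=f=f'=[2]$, the set $\{0\}\cup\{uP_0: u\text{ odd}\}$ is an $(f,K)$-transversal whose preimage meets $E(K)$ only in $0$. Statement (c) itself holds trivially here since $\dim E=1$; it is the step that fails. So a complete proof over a fixed $K$ of (c), and of (b) from $X$ to $X'$, needs an additional argument for grand orbits without $K$-rational lifts. Neither your proposal nor the paper provides one.
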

\begin{proof}
    To prove part (a), let $P'\in X'(K)$ such that $\overline{\Orbit_{f'}(P')}=X'$.
    Then since $\f\circ f'=f\circ \f$, we find $\f(\Orbit_{f'}(P'))=\Orbit_{f}(\f(P'))$, and $\f$ being continuous and closed implies that
    \[\overline{\Orbit_{f}(\f(P'))}=\overline{\f(\Orbit_{f'}(P'))}=\f(\overline{\Orbit_{f'}(P')})=\f\left(X'\right)=X.\]
    Therefore $\f(P)$ has dense orbit in $X$.
    
    Now let $P\in X(K)$ such that $\overline{\Orbit_f(P)}=X$.
    Choose any $P'\in \f^{-1}(P)$.
    We know that $\f(\Orbit_{f'}(P'))=\Orbit_f(P)$, and 
    \[\f(\overline{\Orbit_{f'}(P')})=\overline{\f(\Orbit_{f'}(P'))}=\overline{\Orbit_{f}(P)}=X.\]
    Since $\f$ is a morphism, and it cannot increase the dimension of a variety, we find
    \[\dim\left(X'\right)=\dim(X)=\dim(\f(\overline{\Orbit_{f'}(P')}))\le \dim(\overline{\Orbit_{f'}(P')})\le \dim\left(X'\right).\]
    Therefore $\overline{\Orbit_{f'}(P')}=X'$, and $P'$ has dense orbit in $X'$.
    
    To prove part (b), let $S$ be a dense $(f,K)$-transversal in $X(K)$. 
    For each $s\in S$, choose one $s'\in \f^{-1}(s)$ to form the set $S'$:
    \[S'=\bigcup_{s\in S}\{s'\}\subset X'.\]
    Note that $S'$ is dense in $X'$, because 
    \[\dim\left(X'\right)=\dim(X)=\dim(\f(\overline{S'}))\le \dim(\overline{S'})\le \dim\left(X'\right).\]
    We claim that the points in $S'$ are in distinct grand $(f',K)$-orbits. 
    Suppose for contradiction that there exists $s_1,s_2\in S'$ such that $f'^n(s_1)=f'^m(s_2)$ for some $n,m$.
    Then 
    \[f^n(\f(s_1))=\f(f'^n(s_1))=\f(f'^m(s_2))=f^m(\f(s_2)),\] contradicting the assumption that $\f(s_1)$ and $\f(s_2)$ are in distinct grand $(f,K)$-orbits.
    Hence we may complete $S'$ to an $(f',K)$-transversal in $X'(K)$, which will still be dense.
    
    Now let $S'$ be a dense an $(f',K)$-transversal in $X'(K)$.
    By Lemma \ref{unionorbits}, the set $\f(S')$ can be partitioned into $\#\ker(\f)_{\Per}$ sets $S_1,\ldots,S_{\#\ker(\f)_{\Per}}$ such that for each $i$, the points in $S_i$ are in distinct grand $(f,K)$-orbits.
    Then 
    \[\overline{S_1}\cup\cdots\cup \overline{S_{\#\ker(\f)_{\Per}}}=\overline{S_1\cup\cdots\cup S_{\#\ker(\f)_{\Per}}}=\overline{\f(S')}=X,\] and by irreducibility of $X$, there exists $S_j$ such that $\overline{S_j}=X$.
    Therefore we may complete $S_j$ to an $(f,K)$-transversal in $X(K)$, which will still be dense.

    To prove part (c), we proceed by proving the contrapositive. 
    Let $S\subset X(K)$ be an $(f,K)$-transversal such that $\overline{S}\subsetneq X$.
    Observe that $\f^{-1}(S)$ is not dense in $X'$, because if it were, then \[\overline{S}=\overline{\f(\f^{-1}(S))}=\f(\overline{\f^{-1}(S)})=X,\] which contradicts the assumption that $S$ is not dense.
    
    Note that $\f^{-1}(S)$ contains an $(f',K)$-transversal:
    For any point $P'\in X'(K)$, there is a point $Q\in S$ such that $f^n(Q)=f^m(\f(P'))$ for some $n,m\in \ZZ$. 
    Let $Q_0\in \f^{-1}(Q)$, then 
    \[f^n(\f(Q_0))=\f(f'^n(Q_0))=\f(f'^m(P')),\]
    which implies that for some $\k'\in K$, we have $f'^n(Q_0)=f'^m(P')+\k'$.
    Decompose $f'$ into the composition of a homomorphism $\t'$ followed by a translation by $c'$ by the rigidity theorem:
    \[f'(x)=\t'(x)+c'.\]
    By equation (\ref{fnxa}) in Lemma \ref{unionorbits}, for any $N\in \NN$,
    \[f'^{n+N}(Q_0)=f'^{m+N}(P')+\t'^N(\k').\]
    Choose $N$ sufficiently large so that $\ker(\f)_{\Per}:=\t'^N(\ker(\f))=\t'^{N+1}(\ker(\f))$ stabilizes,
    and $\t'$ acts on $\ker(\f)_{\Per}$ periodically.
    Then $\t'^N(\k')\in \ker(\f)_{\Per}$ and $\t'$ is an automorphism on $\ker(\f)_{\Per}$.
    Take $\k\in \ker(\f)_{\Per}$ such that $\t'^{n+N}(\k)=-\t'^N(\k')\in \ker(\f)_{\Per}$, then we find
    \begin{align*}
        f'^{n+N}(Q_0+\k)=&f'^{n+N}(Q_0)+\t'^{n+N}(\k)\\
        =&f'^{n+N}(Q_0)-\t'^N(\k')\\
        =&f'^{m+N}(P')+\t'^N(\k')-\t'^N(\k')\\
        =&f'^{m+N}(P').
    \end{align*}

    We have found an element $Q_0+\k\in \f^{-1}(Q)$ in the same grand $(f',K)$-orbit as $P'$, so $\f^{-1}(S)$ contains an $(f',K)$-transversal.
    Take a subset $S'\subseteq\f^{-1}(S)$ that is an $(f',K)$-transversal, then ${S'}$ is not dense in $X'$ because $\f^{-1}(S)$ is not.

    Now let $S'\subset X'(K)$ be an $(f',K)$-transversal such that $\overline{S'}\subsetneq X'$.
    Then $\f(S')$ contains an $(f,K)$-transversal:
    For any point $P\in X$, let $P'\in \f^{-1}(P)$.
    Then there is $Q'\in S'$ such that $f'^n(Q')=f'^m(P')$, which implies
    \[f^n(\f(Q'))=\f(f'^n(Q'))=\f(f'^m(P'))=f^m(\f(P'))=f^m(P).\]
    Furthermore, the set $\f(S')$ cannot be dense in $X$, because 
    \[\dim(\overline{\f(S')})\le \dim (\overline{S'})<\dim\left(X'\right)=\dim(X).\]
    Take a subset $S\subseteq \f(S')$ that is an $(f,K)$-transversal, then $S$ is not dense in $X$.
\end{proof}

\begin{lemma}\label{iterinv}
    Let $K/\mathbb{Q}$ be a number field, $X/K$ an abelian variety. Let $f:X\to X$ be a regular self-map.
    Then for all $n\ge 1$:
    \begin{enumerate}[label=\emph{(\alph*)}]
        \item There is a point $P\in X(K)$ such that $\Orbit_f(P)$ is dense in $X$ if and only if there is a point $P'\in X(K)$ such that $\Orbit_{f^n}(P')$ is dense in $X$.
        \item There is a dense $(f,K)$-transversal in $X(K)$ if and only if there is a dense $(f^n,K)$-transversal in $X(K)$.
        \item Every $(f,K)$-transversal in $X(K)$ is dense in $X$ if and only if every $(f^n,K)$-transversal in $X(K)$ is dense in $X$.
    \end{enumerate}
\end{lemma}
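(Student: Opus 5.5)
The plan is to relate the two dynamical systems through the elementary fact that every $f$-orbit is a finite union of $f^n$-orbits, and every grand $(f,K)$-orbit is a finite union of grand $(f^n,K)$-orbits. Concretely,
\[
\Orbit_f(P)=\bigcup_{i=0}^{n-1}\Orbit_{f^n}(f^i(P)),
\]
and the grand orbit $\grano_{f,K}(P)$ is the union of the grand $(f^n,K)$-orbits of $P,f(P),\ldots,f^{n-1}(P)$. The second equality needs a short check. If $f^a(Q)=f^b(P)$, apply $f^{r}$ with $r$ chosen so that $a+r\equiv 0 \pmod n$. Writing $b+r=qn+i$ with $0\le i<n$, we get $(f^n)^{(a+r)/n}(Q)=(f^n)^q(f^i(P))$. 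The reverse inclusion is immediate, since every grand $(f^n,K)$-orbit is contained in a grand $(f,K)$-orbit.

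For part (a), the direction from $f^n$ to $f$ is trivial, since $\Orbit_{f^n}(P')\subseteq\Orbit_f(P')$. Conversely, suppose $\overline{\Orbit_f(P)}=X$. Taking closures in the finite union above gives $X=\bigcup_{i}\overline{\Orbit_{f^n}(f^i(P))}$. Since $X$ is irreducible, some $f^i(P)\in X(K)$ already has dense $f^n$-orbit.

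For part (b), start from a dense $(f^n,K)$-transversal $S$. Each grand $(f,K)$-orbit contains at most $n$ points of $S$, because it is a union of at most $n$ grand $(f^n,K)$-orbits. So I can partition $S$ into $n$ subsets $S_1,\ldots,S_n$, each meeting every grand $(f,K)$-orbit at most once, by labelling the points of $S$ inside each grand $(f,K)$-orbit. Irreducibility of $X$ gives some $\overline{S_j}=X$, and $S_j$ can be completed to an $(f,K)$-transversal. Conversely, given a dense $(f,K)$-transversal $S$, points of $S$ lie in distinct grand $(f,K)$-orbits, hence in distinct grand $(f^n,K)$-orbits. So $S$ can be completed to an $(f^n,K)$-transversal, which is still dense.

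Part (c) is where the main care is needed, and I will argue by contrapositive, mirroring Lemma \ref{isoginv}(c).
\begin{enumerate}[label=(\roman*)]
\item Suppose $S$ is a non-dense $(f,K)$-transversal. Then $T=\bigcup_{i=0}^{n-1}f^i(S)$ meets every grand $(f^n,K)$-orbit, by the decomposition above. It is non-dense, since $\overline{f^i(S)}\subseteq f^i(\overline S)$, which is a proper closed subset because $f^i$ is a closed map and cannot raise dimension. A subset of $T$ chosen as one point per grand $(f^n,K)$-orbit is then a non-dense $(f^n,K)$-transversal.
\item Suppose $S'$ is a non-dense $(f^n,K)$-transversal. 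It meets every grand $(f,K)$-orbit, because each grand $(f,K)$-orbit contains a grand $(f^n,K)$-orbit. Choosing one point per grand $(f,K)$-orbit from $S'$ gives a non-dense $(f,K)$-transversal.
\end{enumerate}

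The only subtle step is the grand-orbit decomposition, specifically the index bookkeeping that brings both exponents to multiples of $n$. Everything else reduces to irreducibility of $X$ and closedness of morphisms of projective varieties.
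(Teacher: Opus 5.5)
Your proposal is correct and takes essentially the same route as the paper. It uses the decompositions of $\Orbit_f(P)$ and $\grano_{f,K}(P)$ into $n$ pieces, irreducibility of $X$ for (a) and (b), and for (c) the contrapositive arguments via $\bigcup_{i=0}^{n-1}f^i(S)$ and via subsets of $S'$. Your index bookkeeping for the grand-orbit decomposition (shifting by $r$ so that $a+r\equiv 0 \pmod n$) is slightly cleaner than the paper's. The paper only requires $an\ge h$, which does not by itself guarantee that its exponent $b$ is nonnegative.
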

\begin{proof}
Note that for any $P\in X(K)$, the $f$-orbit, $f^n$-orbit, grand $(f,K)$-orbit, and grand $(f^n,K)$-orbit of $P$ satisfy the following relations:
\begin{align}
    \Orbit_f(P)=&\bigcup_{i=0}^{n-1}\Orbit_{f^n}(f^i(P)),\label{union1}\\
    \grano_{f,K}(P)=&\bigcup_{i=0}^{n-1}\grano_{f^n,K}(f^i(P)).\label{union2}
\end{align}
The right-hand side is contained in the left-hand side in a straightforward way.
To prove the other containment, we let $Q\in \Orbit_f(P)$, then $Q=f^k(P)$ for some $k\ge 0$. 
Let $j=k\mod{n}$ and write $k=mn+j$, then $Q=(f^n)^m(f^j(P))$ and $Q\in \Orbit_{f^n}(f^j(P))$, proving equality (\ref{union1}).

Let $Q'\in \grano_{f,K}(P)$, then $f^h(Q)=f^k(P)$ for some $h,k\ge 0$. 
Let $j=(k-h )\mod{n}$, and we can write $h=k-j+mn$ for some integer $m$.
Take $a$ sufficiently large so that $an\ge h$ and let $b=a-m$, then $h=k-j+(a-b)n$.
Rearranging, we get $an-h=bn+j-k$
Then 
\[(f^n)^a(Q')=f^{an-h}(f^h(Q'))=f^{bn+j-k}(f^k(P))=(f^n)^b(f^j(P)),\] and $Q'\in \grano_{f^n,K}(f^j(P))$, proving equality (\ref{union2}).

To prove (a), if $\Orbit_{f^n}(P)$ is dense in $X$, then since $\Orbit_{f}(P)$ contains $\Orbit_{f^n}(P)$, it's also dense.
On the other hand, if $\Orbit_{f}(P)$ is dense, then 
\[\overline{\bigcup_{i=0}^{n-1}\Orbit_{f^n}(f^i(P))}=\bigcup_{i=0}^{n-1}\overline{\Orbit_{f^n}(f^i(P))}=X,\]
and by irreducibility of $X$, there exists $0\le i\le n-1$ such that $\overline{\Orbit_{f^n}(f^i(P))}=X$, as desired.

To prove (b), let $S\subset X(K)$ be a dense $(f,K)$-transversal in $X$.
Since any grand $(f,K)$-orbit is a union of grand $(f^n,K)$-orbits, we may complete $S$ to an $(f^n,K)$-transversal in $X$, which will still be dense.

Conversely, let $S'\subset X(K)$ be a dense $(f^n,K)$-transversal in $X$.
Since any grand $(f,K)$-orbit is a union of at most $n$ grand $(f^n,K)$-orbits, at most $n$ points of $S'$ can be in the same grand $(f,K)$-orbit.
Therefore we may partition $S'$ into $n$ sets $S_1,\ldots,S_n$ such that for a fixed $i$, all points in $S_i$ are in distinct grand orbits.
Now observe that 
\[\overline{S_1\cup\cdots \cup S_n}=\overline{S_1}\cup\cdots \cup \overline{S_n}=X,\] so by irreducibility of $X$, there is an $i$ such that $\overline{S_i}=X$.
Complete $S_i$ to an $(f,K)$-transversal, which will still be dense.

To prove (c), let $S'\subset X(K)$ be an $(f^n,K)$-transversal in $X$ such that $\overline{S'}$ is a proper subvariety of $X$. 
$S'$ contains an $(f,K)$-transversal, because every grand $(f,K)$-orbit is a union of grand $(f^n,K)$-orbits, which has representatives in $S'$.
Therefore we may take a subset $S\subset S'$ that is an $(f,K)$-transversal, and $S$ is not dense in $X$.

Conversely, let $S\subset X(K)$ be an $(f,K)$-transversal in $X$ such that $\overline{S}$ is a proper subvariety of $X$.
Then $\cup_{i=0}^{n-1}f^i(S)$ will contain an $(f^n,K)$-transversal $S'\subset \cup_{i=0}^{n-1}f^i(S)$.
Note that 
\[\overline{\cup_{i=0}^{n-1}f^i(S)}=\cup_{i=0}^{n-1}\overline{f^i(S)}=\cup_{i=0}^{n-1}f^i(\overline{S})\] cannot equal $X$, because $\overline{S}$ is a proper subvariety and $X$ is irreducible.
Then $\overline{S'}\ne X$ and $S'$ is not dense.
\end{proof}

\begin{lemma}\label{splitmap}
    Let $X$ be an abelian variety isogenous to a product
    $B_1^{n_1}\times\cdots\times B_k^{n_k}$ where the $B_i$ are simple and pairwise non-isogenous. 
    Let $A_i$ be the isotypic component\footnote{The unique maximal abelian subvariety of $A$ whose simple factors are all isogenous to $B_i$.} in $X$ corresponding to $B_i$, then we have $A_1+\cdots+A_k=X$ and an isogeny given by summation of the coordinates
    \[\f:\prod_{i=1}^k A_i\to X.\]
    
    Let $f:X\to X$ be a regular self-map on $X$, then we can decompose $f$ into $f=f_1+\cdots +f_k$ (not necessarily uniquely), where $f_i:A_i\to A_i$ is the restriction of $f$ to $A_i$.
    More precisely, for any $x\in X$, we can write $x=x_1+\cdots+x_k$ (not necessarily uniquely), where $x_i\in A_i$, and we have
    \[f(x)=f_1(x_1)+\cdots+f_k(x_k).\]
    Moreover, let map $f':\prod_{i=1}^k A_i\to \prod_{i=1}^k A_i$ be given by
    \[f'(x_1,\ldots,x_k)= (f_1(x_1),\ldots,f_k(x_k)),\]
    then the following diagram commutes:
    \begin{equation}\label{prodliftdiag}
        \begin{tikzcd}
        \prod_{i=1}^k A_i \arrow[r, "f'"] \arrow[d, "\f"] & \prod_{i=1}^k A_i \arrow[d, "\f"] \\
        X \arrow[r, "f"]                                   & X                                 
\end{tikzcd}
    \end{equation}
\end{lemma}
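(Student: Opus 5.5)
The plan has three parts: show the isotypic components sum to $X$ and the summation map is an isogeny; show that $f$ carries each $A_i$ into a translate of $A_i$; and then check the commutativity of diagram (\ref{prodliftdiag}) directly.

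\textbf{Summation map.} Fix an isogeny $\psi:\prod_i B_i^{n_i}\to X$. The image $\psi(B_i^{n_i})$ is an abelian subvariety of $X$ all of whose simple factors are isogenous to $B_i$, so it lies in $A_i$. Hence $A_1+\cdots+A_k\supseteq\psi(\prod B_i^{n_i})=X$. Thus the summation map $\f:\prod A_i\to X$, which is a homomorphism, is surjective.

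For finiteness of the kernel I compare dimensions. By Poincaré reducibility, the simple factors of $A_i$ occur among those of $X$ and are isogenous to $B_i$, so $\dim A_i\le n_i\dim B_i$. Therefore $\dim\prod A_i\le\dim X$. Surjectivity forces equality, so $\f$ has finite kernel and is an isogeny.

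\textbf{$f$ preserves each $A_i$.} By the rigidity theorem, write $f=\t+c$ with $\t\in\End(X)$. The image $\t(A_i)$ is an abelian subvariety that is a quotient of $A_i$, so its simple factors are isogenous to $B_i$, and maximality gives $\t(A_i)\subseteq A_i$. The translation $c$ is the subtle point, and I expect it to be the main obstacle. A translate $A_i+c$ need not equal $A_i$, so "restriction of $f$ to $A_i$" only makes sense up to distributing $c$.

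Using $A_1+\cdots+A_k=X$, I would write $c=c_1+\cdots+c_k$ with $c_i\in A_i$. This choice is the source of the non-uniqueness in the statement. I then define $f_i:A_i\to A_i$ by $f_i(x)=\t(x)+c_i$, which is $f|_{A_i}$ up to translation. With this definition, for $x=x_1+\cdots+x_k$,
\[f(x)=\t(x_1)+\cdots+\t(x_k)+c_1+\cdots+c_k=f_1(x_1)+\cdots+f_k(x_k).\]
This uses only additivity of $\t$, so the identity holds for every decomposition of $x$.

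\textbf{Commutativity.} Applying $\f$ to $f'(x_1,\ldots,x_k)$ gives $\sum_i f_i(x_i)=f(\sum_i x_i)=f(\f(x_1,\ldots,x_k))$, which is exactly commutativity of diagram (\ref{prodliftdiag}).

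Everything except the translation issue is routine. The one genuine input is the maximality and isogeny-invariance of isotypic components, which gives $\t(A_i)\subseteq A_i$.
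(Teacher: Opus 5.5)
Your proposal is correct and follows essentially the same route as the paper: you use rigidity to write $f=\t+c$, show $\t(A_i)\subseteq A_i$, split $c=c_1+\cdots+c_k$ using $A_1+\cdots+A_k=X$, define $f_i=\t|_{A_i}+c_i$, and get commutativity from additivity of $\t$. The differences are minor: you prove the summation map is an isogeny by a dimension count where the paper simply cites Poincar\'e reducibility, and you obtain $\t(A_i)\subseteq A_i$ from maximality of the isotypic component rather than from $\Hom(A_i,X/A_i)=0$.
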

\begin{proof}
    The decomposition of $X$ into isotypic components is essentially the Poincar\'e reducibility theorem. We will prove the properties of $f:X\to X$.
    
    Since every map between abelian varieties is the composition of a homomorphism and a translation, we can write $f$ as 
    \[f(x)=\t(x)+c,\]
    where $\t$ is a homomorphism and $c\in X$.
    Consider the restriction of $\t$ on $A_i$:
    \[\t_i:=\t|_{A_i}:A_i\to A,\]
    we claim that the image of $\t_i$ is contained in $A_i$, i.e., $\t_i:A_i\to A_i$.
    To see that, consider the composition
    \[q\circ \t_i:A_i\to A\to A/A_i,\]
    where $q$ is the quotient-by-$A_i$ map.
    Since $A_i$ and $A/A_i$ have no isogenous factors, we know that $\Hom(A_i,A/A_i)=0$, and thus that $q\circ \t_i=0$, which means $\t_i(A_i)\subset \ker(q)=A_i$, proving the claim.

    Let $c=c_1+\cdots +c_k$ be a choice of decomposition such that $c_i\in A_i$ and define $f_i:A_i\to A_i$ given by $f_i(x)=\t_i(x)+c_i$, then for any choice of decomposition of $x=x_1+\cdots +x_k$, where $x_i\in A_i$, we find
    \begin{align*}
        \sum_{i=1}^k f_i(x_i)&=\sum_{i=1}^k (\t_i(x_i)+c_i)\\
        &=\left(\sum_{i=1}^k \t_i(x_i)\right)+c\\
        &=\left(\sum_{i=1}^k \t(x_i)\right)+c\\
        &= \t\left(\sum_{i=1}^kx_i\right)+c\\
        &= \t\left(x\right)+c\\
        &=f(x).
    \end{align*}
    It then follows that diagram (\ref{prodliftdiag}) commutes, as $\f$ is effectively the summation map.
\end{proof}

\begin{lemma}\label{projorbit}
    Let $X_1,\ldots,X_k$ be sets, let $f_i:X_i\to X_i$ be maps, let $X\cong X_1\times \cdots \times X_k$ be the product, and define $f:X\to X$ by $f(x_1,\ldots,x_k)=(f_1(x_1),\ldots,f_k(x_k))$. Let $x=(x_1,\ldots,x_k)$ and $y=(y_1,\ldots,y_k)$ be points of $X$. If there exists an index $i$ such that $\Orbit_{f_i}^\grand(x_i)\ne\Orbit_{f_i}^\grand(y_i)$, then $\Orbit_{f}^\grand(x)\ne\Orbit_{f}^\grand(y)$.
\end{lemma}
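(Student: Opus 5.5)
I would prove the contrapositive: assuming $\Orbit_{f}^\grand(x)=\Orbit_{f}^\grand(y)$, I show that $\Orbit_{f_i}^\grand(x_i)=\Orbit_{f_i}^\grand(y_i)$ for every index $i$. Here the grand orbit is taken in the set-theoretic sense of the definition: $z$ lies in the grand orbit of $w$ if $f^n(z)=f^m(w)$ for some $n,m\ge 0$. This relation is an equivalence relation; transitivity comes from iterating further, exactly as in the proof of Lemma \ref{iterinv}. So two grand orbits are either equal or disjoint, and equality of the grand orbits of $x$ and $y$ is the same as $y\in\Orbit_f^\grand(x)$.

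The key step is to compute iterates of the product map coordinatewise. By a trivial induction on $n$,
\[f^n(x_1,\ldots,x_k)=(f_1^n(x_1),\ldots,f_k^n(x_k)).\]
If $y\in\Orbit_f^\grand(x)$, there are $n,m\ge 0$ with $f^n(y)=f^m(x)$. Comparing the $i$-th coordinates gives $f_i^n(y_i)=f_i^m(x_i)$. This is the same pair of exponents for every $i$, though we only need it for each $i$ separately. Hence $y_i\in\Orbit_{f_i}^\grand(x_i)$, so the two grand orbits of the $i$-th coordinates coincide. This contradicts the hypothesis that they differ for some index $i$.

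There is no real obstacle here. The only point needing care is the passage from "not equal" to "disjoint" for grand orbits, which uses the equivalence-relation property noted above. The lemma is purely set-theoretic, so no algebraic geometry is needed; in particular the field $K$ plays no role, since the argument only involves the maps $f_i$ and the points themselves.
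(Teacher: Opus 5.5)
Your proof is correct and is essentially the paper's argument: both prove the contrapositive by taking $n,m$ with $f^n(y)=f^m(x)$, using that $f$ iterates coordinatewise, and reading off the $i$-th coordinate. Your remark that grand orbits partition the set, since membership is an equivalence relation, justifies a step the paper leaves implicit, namely that equal grand orbits is the same as $y\in\Orbit_f^\grand(x)$.
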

\begin{proof}
    Let $x=(x_1,\ldots,x_k)$ and $y=(y_1,\ldots,y_k)$ be points of $X$ such that $x$ and $y$ are in the same grand $(f,K)$-orbit, then there exists integers $n,m$ such that $f^n(x)=f^m(y)$.
    By the decomposition of $f$, we get that 
    \[(f_1^n(x_1),\ldots,f_k^n(x_k))=(f_1^m(y_1),\ldots,f_k^m(y_k)),\]
    so $x_i\in \grano_{f_i,K}(y_i)$ for all $i$, proving the contrapositive of the lemma.
\end{proof}
\begin{remark}
    Note that the converse of Lemma \ref{projorbit} is usually not true. 
    For a concrete counterexample, consider $X=\ZZ\times \ZZ$, $f:X\to X$ given by $f(x,y)=(2x,3y)$, and take $(1,1),(1,3)\in X$. 
    Observe that $1$ is in the same grand $f_1$-orbit as $1$, and $1$ is in the same grand $f_2$-orbit as $3$.
    However, the grand $(f,K)$-orbits of $(1,1)$ and $(1,3)$ are disjoint:
    \begin{align*}
        \grano_{f,K}((1,1))=&\{(2^n,3^n)\in X\mid n\in \ZZ\},\\
        \grano_{f,K}((1,3))=&\{(2^n,3^{n+1})\in X\mid n\in \ZZ\}.
    \end{align*}
    In general, the set of grand orbits of a map on a product is not equal to the product of the sets of grand orbits of the component maps, precisely because of the time synchronization issue demonstrated in the example.
\end{remark}

\section{Density of special sets in abelian varieties}
Goursat's lemma in group theory \cite[Theorem 4]{goursat} states that the subgroups of $G\times G'$ that surject to both factors under the projection maps are in bijection with triples consisting of $(N,N',g)$, where $N\trianglelefteq G$ and $N'\trianglelefteq G'$ are normal subgroups and $g:G/N\xrightarrow{\sim} G'/N'$ is an isomorphism.
The lemma holds in the isogeny category of abelian varieties as well, and we will use it to find dense sets in abelian varieties.

The isogeny category of abelian varieties over a field $K$ is a category with the abelian varieties over $K$ as objects and 
\[\Hom^0(A,A'):=\Hom(A,A')\otimes_\ZZ\QQ,\quad \End^0(A):=\End(A)\otimes_{\ZZ}\QQ\]
as morphism groups.
In the isogeny category, isogenies are invertible and hence isomorphisms in the category.
\begin{lemma}\label{goursatab}
    Let $A$ and $A'$ be abelian varieties over a field $K$. Then abelian subvarieties $H\subseteq A\times A'$ that surject to both factors under the projection maps are in bijection with triples consisting of $(N,N',\g)$, where $N\subseteq A$ and $N'\subseteq A'$ are abelian subvarieties and $\g:A/N\to A'/N'$ is a quasi-isogeny, i.e., an isomorphism in the isogeny category.
\end{lemma}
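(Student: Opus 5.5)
Following the proof of the group-theoretic Goursat lemma, but working with kernels up to connected components and inverting isogenies. Throughout, write $p:H\to A$ and $p':H\to A'$ for the two projections restricted to $H$.

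\emph{From $H$ to a triple.} Set $N:=(H\cap (A\times 0))^0$ and $N':=(H\cap(0\times A'))^0$, the identity components of these group subschemes, viewed as abelian subvarieties of $A$ and $A'$. The projection $p'$ gives a surjection $H\to A'$, and composing with the quotient map yields $H\to A'/N'$. Its kernel contains $H\cap(A\times N')$, which is the preimage of $N'$ under $p'$ and contains $N\times 0$. Define $\g$ as the composite of two arrows. The first is the inverse, in the isogeny category, of the isogeny
\[H/(N\times N')\to A/N \quad\text{induced by } p.\]
This map is surjective because $p$ is. Its kernel is $(p^{-1}(N))/(N\times N')$. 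Any point of $H$ with first coordinate in $N$ can be translated by an element of $N\times 0$ to a point of $H\cap(0\times A')$. Since $N'$ has finite index in $H\cap(0\times A')$, the kernel is finite. The second arrow is the analogous map $H/(N\times N')\to A'/N'$. So $\g$ is a quasi-isogeny.

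\emph{From a triple to $H$.} Given $(N,N',\g)$, choose an integer $m$ with $m\g=:\psi$ an honest homomorphism $A/N\to A'/N'$. Let $H$ be the identity component of
\[\{(a,a')\in A\times A' : \psi(\bar a)=m\,\bar a'\}.\]
This is an abelian subvariety. It surjects onto $A$, because $[m]$ is surjective on $A'/N'$ and hence so is $A'\to A'/N'$; surjectivity passes to the identity component since the image is closed of full dimension in the connected $A$. The same argument applied to $\g^{-1}$ gives surjectivity onto $A'$. One also checks that the result does not depend on the choice of $m$, since changing $m$ only changes the group by finite index.

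\emph{Mutual inverse.} Each construction must undo the other. For $H\mapsto(N,N',\g)\mapsto H'$, both $H$ and $H'$ are connected and $H\subseteq H'$ up to finite index, after unwinding the definition of $\g$ as a graph. The two also have equal dimension, namely $\dim A+\dim N'$. Hence $H=H'$. For the converse direction, compute the identity components of the fibers of the constructed $H$ over $0$, which recovers $N$ and $N'$, and check that the induced quasi-isogeny is $\g$.

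\emph{Main obstacle.} The hard part is keeping track of identity components versus finite group schemes. The raw intersection $H\cap(A\times0)$ may be disconnected, and this is exactly why the statement uses quasi-isogenies rather than isomorphisms. A related subtlety is that in characteristic $p$ the kernels may be non-reduced, so every "finite index" claim must be made for finite group schemes. I will handle this by working with dimensions and connected components throughout, and by using that an isogeny becomes invertible in $\Hom^0$.
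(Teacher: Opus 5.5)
Your proposal is correct and takes essentially the paper's route. You use the same $N$ and $N'$, the same quotient $H/(N\times N')$ (the paper's $\overline{H}=(q,q')(H)$) with $\g=p'\circ p^{-1}$, and the same graph-pullback in reverse, since the identity component of $\{(a,a'):\psi(\bar a)=m\bar a'\}$ is exactly $(q,q')^{-1}\bigl(([m],m\g)(A/N)\bigr)$. The only real variation is that you close the round trip $H\mapsto(N,N',\g)\mapsto H'$ by showing $H\subseteq H'$ and comparing dimensions ($\dim A+\dim N'$), where the paper checks pointwise that $\overline{H}$ is the graph of $\g$ and $H=(q,q')^{-1}(\overline{H})$; and for surjectivity onto $A'$ it is cleaner to note directly that $\psi=m\g$ is an isogeny, hence surjective, than to appeal to $\g^{-1}$.
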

\begin{proof}
    We will follow the proof structure in \cite{goursat}, constructing maps in both directions, and verifying that they are inverses to each other.

    Let $H\subseteq A\times A'$ be an abelian subvariety that surjects to both factors under the projection maps.
    Let 
    \[N=(H\cap (A\times 0))^\circ, \quad N'=(H\cap (0\times A'))^\circ\]
    be the identity components of the intersections respectively, and let 
    \[q:A\to A/N,\quad q':A'\to A'/N'\] be the quotient maps.
    The image of $H$ under the component-wise quotient map, $\overline{H}=(q, q')(H)$, is an abelian subvariety of $A/N\times A'/N'$.
    Note that $\overline{H}$ surjects to both $A/N$ and $A'/N'$ under the projection maps.
    
    We claim that the projection maps 
    \[p:\overline{H}\to A/N, \quad p':\overline{H}\to A'/N'\] are in fact isogenies.
    Let $(0,y)\in \ker(p)$, then there exists $(a,b)\in H$ such that $q(a)=0$ and $q'(b)=y$.
    Note that $(a,0)\in N\subseteq H$, and $(0,b)=(a,b)-(a,0)\in H$ as well, which implies that $(0,b)\in (H\cap (0\times A'))$.
    Then we find $(0,y)=(q, q')(0,b)\in (H\cap (0\times A'))/N'$, so $\ker(p)\subseteq (H\cap (0\times A'))/N'$.
    The set $(H\cap (0\times A'))/N'$ is finite because a closed algebraic subgroup is a finite union of translates of its identity component.
    Therefore $\ker(p)$ is finite, and the argument for $p'$ is symmetric.

    Since $p$ and $p'$ are isogenies, they are isomorphisms in the isogeny category, and we may take 
    \[\g=p'\circ p^{-1}\in \Hom^0(A/N,A'/N'),\] 
    which finishes the construction of the triple $(N,N',\g)$.

    Now let $(N,N',\g)$ be a triple where $N\subseteq A$ and $N'\subseteq A'$ are abelian subvarieties and $\g\in \Hom^0(A/N,A'/N')$ is a quasi-isogeny.
    Let $m>0$ be an integer such that $m\g\in \Hom(A/N,A'/N')$.
    Then consider the graph $\overline{H}$ of the quasi-isogeny $\g$ in $A/N\times A'/N'$, which is defined as the image of $A/N$ under the map $([m], m\g):A/N\to A/N\times A'/N'$:
    \begin{align*}
        \overline{H}=&([m],m\g)(A/N)\\
        =&\{(ma,(m\g)(a))\in A/N\times A'/N'\mid a\in A/N\}.
    \end{align*}
    Note that the set $\overline{H}$ is independent of the choice of $m$.
    For any other choice $m'$, we can compare $([m],m\g)(A/N)$ and $([m'],m'\g)(A/N)$ to $([\lcm(m,m')],\lcm(m,m')\g)(A/N)$ and find they are all equal, as multiplication by a nonzero integer is an isogeny and therefore surjective. 
    By construction, the set $\overline{H}$ is an abelian subvariety of $A/N\times A'/N'$ and surjects to both $A/N$ and $A'/N'$ under the projection maps.
    Take 
    \[H=(q, q')^{-1}(\overline{H}).\] 
    It is a closed algebraic subgroup of $A\times A'$. 
    In fact, it is an abelian subvariety of $A\times A'$ because of the following short exact sequence:
    \[0\to N\times N'\to H\to \overline{H}\to 0,\]
    where $N\times N'$ and $\overline{H}$ are both connected.
    
    We claim that $H$ surjects to both $A$ and $A'$ under the projection maps.
    Let $a\in A$.
    Since $[m]:A/N\to A/N$ is surjective, we can choose $x\in A/N$ such that $mx=q(a)$.
    Choose $b\in A'$ such that $q'(b)=m\g(x)$, then 
    \[
        (q(a),q'(b))=(mx,(m\gamma)(x))\in \overline{H},
    \]
    so $(a,b)\in H$. Thus $H$ surjects to $A$.
    Similarly, let $b\in A'$. 
    Since $m\g$ is an isogeny and therefore surjective, there exists $x\in A/N$ such that $m\g(x)=q'(b)$.
    Then we can choose $a\in A$ such that $q(a)=mx$, which implies that \[(q(a),q'(b))=(mx,(m\g)(x))\in \overline{H},\]
    so $(a,b)\in H$, and we have shown that $H$ surjects to $A'$ as well.

    To show the two constructions are inverses to each other, we first start with $H\subseteq A\times A'$ an abelian subvariety that surjects to both factors under the projection maps. Then construct
    \[
        N,\quad N',\quad \overline{H},\quad p:\overline{H}\to A/N,\quad
        p':\overline{H}\to A'/N',\quad \text{and }\gamma=p'\circ p^{-1}.
    \]
    To reconstruct $H$, we first take the graph $\G$ of $\g$: let $m>0$ such that $mp^{-1}$ and $m\g$ are isogenies, then
    \[\G=\{(ma,m\g(a))\in A/N\times A'/N'\mid a\in A/N\}.\]
    Observe that $\overline{H}=\G$:
    For every point $(a,b)\in \overline{H}$, there exists some $(x,y)\in \overline{H}$ such that $(a,b)=(mx,my)$, because $[m]$ is surjective on $\overline{H}$.
    We find 
    \[my=p'\circ p^{-1}(mx)=m\g(x),\] 
    so $(a,b)=(mx,m\g(x))\in \G$ and $\overline{H}\subseteq \G$.
    For every point $(ma,m\g(a))\in \G$ where $a\in A/N$, we find 
    \[(ma,m\g(a))=(p(mp^{-1}(a)),p'(mp^{-1}(a))),\]
    where $mp^{-1}(a)\in \overline{H}$ and $(ma,m\g(a))$ is the point $mp^{-1}(a)$ written in coordinates, so $\G\subseteq\overline{H}$.
    
    We claim that $H=(q, q')^{-1}(\overline{H})$.
    It's clear that $H\subseteq (q, q')^{-1}(\overline{H})$.
    Let $(a,b)\in (q, q')^{-1}(\overline{H})$, then there exists $(a_0,b_0)\in H$ such that $q(a_0)=q(a)$ and $q'(b_0)=q'(b)$.
    Since $q$ and $q'$ are homomorphisms, we find that $a-a_0\in N$ and $b-b_0\in N'$, and therefore $(a-a_0,0),(0,b-b_0)\in H$.
    Then 
    \[(a,b)=(a_0,b_0)+(a-a_0,0)+(0,b-b_0)\in H,\]
    as desired.

    For the other direction, let $(N,N',\g)$ be a triple where $N\subseteq A$ and $N'\subseteq A'$ are abelian subvarieties and $\g\in \Hom^0(A/N,A'/N')$ is a quasi-isogeny.
    Then let $m>0$ such that $m\g$ is an isogeny, and we get
    \begin{gather*}
        \overline{H}=([m],m\g)(A/N)=\{(ma,m\g(a))\in A/N\times A'/N'\mid a\in A/N\},\\
        q:A\to A/N,\quad q':A'\to A'/N',\\
        H=(q, q')^{-1}(\overline{H}).
    \end{gather*}
    To reconstruct the triple from $H$, we take
    \[M=(H\cap (A\times 0))^\circ, \quad M'=(H\cap (0\times A'))^\circ.\]
    It's clear from construction that $N\subseteq M$ and $N'\subseteq M'$.
    We claim that equalities in fact hold.
    Note that $q(H\cap (A\times 0))\subseteq \overline{H}\cap (A/N\times 0)$, which is a finite set because $\overline{H}\cap (A/N\times 0)$ is contained in the kernel of the isogeny $p':\overline{H}\to A'/N'$.
    Since $q(M)$ is connected and contained in a finite set, we get $q(M)=0$, and $M\subseteq N$.
    The argument for $M'=N'$ is symmetric.
    Finally, note that 
    \[p\circ ([m],m\g)=[m]:A/N\to A/N,\quad  p'\circ ([m],m\g)=m\g:A/N\to A'/N'.\]
    Then in the isogeny category, we have
    \[p'\circ p^{-1}=(p'\circ ([m],m\g))\circ(p\circ ([m],m\g))^{-1}=m\g \circ [m]^{-1}=\g,\]
    and we recover $\g$ as well.
\end{proof}

Now we prove various lemmas that help us with  demonstrating that sets are Zariski dense in $X$ and constructing dense sets in $X$.

\begin{lemma}\label{densityreduction}
Let $K$ be a field of characteristic 0 and let $X$ be an abelian variety over $K$. 
Let $\f$ be an isogeny from $X$ to a product of isotypic components of $X$
\[\f:X\to \prod_{i=1}^nA_i,\]
where $A_i\subseteq X$ is the maximal abelian subvariety that is isogenous to a power of a simple abelian variety $B_i$, and $B_i$ is not isogenous to $B_j$ for $i\ne j$.
Such isogenies exist by the Poincar\'e reducibility theorem.
Composing $\f$ with the projection map $p_i':\prod_{i=1}^nA_i\to A_i$, we get
\[p_i=p_i'\circ \f:X\to A_i.\]
Let $S\subseteq X(K)$ be a subset of $X(K)$. 
Suppose that any one of the following three conditions is true:
\begin{enumerate}[label=\emph{(\alph*)}]
    \item $S$ is contained in a finitely generated subgroup $\Gamma\subseteq X(K)$ and $\overline{S}$ is irreducible, or
    \item $S=\G\subseteq X(K)$ is a finitely generated subgroup of $X(K)$, or
    \item $S=\Orbit_f(P)$ is the orbit of a point $P\in X(K)$ under a dominant self-map $f$ of $X$, 
\end{enumerate}
then $S$ is Zariski dense in $X$ if and only if its image under $p_i$ is Zariski dense in $A_i$ for all $1\le i\le n$.
\end{lemma}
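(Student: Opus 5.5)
The forward direction is immediate in all three cases: $p_i$ is a surjective morphism, so it is continuous and closed, and $p_i(\overline{S})=\overline{p_i(S)}$. Hence $\overline{S}=X$ gives $\overline{p_i(S)}=A_i$. For the converse, $\f$ is an isogeny, so $\overline{S}=X$ if and only if $\overline{\f(S)}=\prod A_i$; this follows from the dimension argument already used in Lemma \ref{isoginv}. So in each case I will work with $Z:=\overline{\f(S)}\subseteq\prod_i A_i$, whose projection to each factor is dense, and show that $Z=\prod_i A_i$.

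The first step is to reduce to the case where $Z$ is an abelian subvariety (or a translate of one). In case (b), $\f(\G)$ is a subgroup, so $Z$ is an algebraic subgroup. Since $\f(\G)$ is finitely generated, a finite-index subgroup $\G_0\subseteq \f(\G)$ has irreducible closure $Z^\circ$. Then $Z$ is a finite union of translates of $Z^\circ$, and $p_i'(Z)=A_i$ together with irreducibility of $A_i$ forces $p_i'(Z^\circ)=A_i$. In case (a), $\overline{S}$ is irreducible and $S$ lies in a finitely generated group. By the Mordell--Lang theorem (Faltings), $\overline{S}$ is a finite union of translates of abelian subvarieties, hence a single translate $x+B$ since it is irreducible. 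Translating, $\f(B)$ is an abelian subvariety that surjects onto each $A_i$. In case (c), write $f=\t+c$. The orbit $\Orbit_f(P)$ lies in the finitely generated group generated by $P,c$ and their images under $\t$, and one checks this group is finitely generated because $\t$ satisfies its characteristic polynomial. Its closure is a finite union of translates of abelian subvarieties permuted by $f$. Passing to an iterate $f^N$ via Lemma \ref{iterinv}, and applying the relation $\Orbit_f(P)=\bigcup_{j<N}\Orbit_{f^N}(f^j(P))$ from its proof, one component becomes the closure of an $f^N$-orbit and is irreducible. That reduces case (c) to case (a) for that component, with density of its projections following from irreducibility of $A_i$ as above.

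The main step is to show that an abelian subvariety $H\subseteq\prod_i A_i$ surjecting onto every factor equals the whole product. I will argue by induction on $n$ using Lemma \ref{goursatab}. Write $\prod_i A_i=A_1\times A'$ with $A'=\prod_{i\ge2}A_i$. The image of $H$ in $A'$ surjects onto each $A_i$ for $i\ge 2$, so by induction it is all of $A'$. Then $H\subseteq A_1\times A'$ surjects onto both factors, and Goursat gives $(N,N',\g)$ with $\g:A_1/N\to A'/N'$ a quasi-isogeny. Every simple factor of $A_1/N$ is isogenous to $B_1$, while every simple factor of $A'/N'$ is isogenous to some $B_j$ with $j\ge2$. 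So both quotients must be zero, giving $N=A_1$, $N'=A'$, and hence $H=A_1\times A'$.

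I expect the main obstacle to be the reduction to subgroups in cases (a) and (c), which requires invoking Faltings' theorem correctly. In particular, for (c) one must verify that the closure of an orbit really is a union of translates of abelian subvarieties that $f$ cycles, and that passing to an iterate preserves the density hypotheses on the projections. If the finite generation argument for (c) is awkward, my fallback is to apply Faltings to $\overline{\Orbit_f(P)}$ directly, since it is contained in the group generated by $P$, $c$ and their $\t$-iterates, which is finitely generated as a $\ZZ[\t]$-module image.
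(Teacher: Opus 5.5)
Your forward direction, your cases (a) and (b), and your Goursat induction are sound and agree with the paper. Your Goursat step is in fact more explicit than the paper's about why $A_1/N$ and $A'/N'$ must vanish.

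The gap is in case (c), at the sentence ``with density of its projections following from irreducibility of $A_i$ as above.'' In case (b) that argument worked because every component of $Z$ is a translate of the \emph{same} abelian subvariety $Z^\circ$. So one component surjects onto $A_i$ if and only if all do, uniformly in $i$. In case (c) the pieces $W_j=\overline{\Orbit_{f^N}(f^j(P))}$ can be translates of different abelian subvarieties. Irreducibility of $A_i$ then only gives, for each $i$ separately, some $j(i)$ with $p_i(W_{j(i)})=A_i$. Nothing you wrote rules out $j(1)\neq j(2)$, which is exactly the configuration $\bigl(E_1\times\{0\}\bigr)\cup\bigl(\{0\}\times E_2\bigr)$ of Remark~\ref{genus2}. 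So you do not yet have a single irreducible piece to which case (a) applies. A telltale sign is that your argument never uses the dynamics of $f$, or its dominance, beyond finite generation of the orbit.

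The missing idea is to carry surjectivity around the cycle of components using equivariance. By Lemma~\ref{splitmap}, $f$ respects the isotypic decomposition, so $p_i\circ f=f_i\circ p_i$ with $f_i$ dominant, hence surjective, on $A_i$ (in the product model; up to isogeny otherwise, which does not affect surjectivity). Since $f(W_j)\subseteq W_{j+1}$, with indices taken cyclically in the periodic range, $p_i(W_j)=A_i$ gives $p_i(W_{j+1})\supseteq f_i(p_i(W_j))=A_i$. The periodic components form a single cycle, so every $W_j$ surjects onto every $A_i$ simultaneously. Only at this point can you invoke (a), applied to $W_j$ and the orbit points in it, which lie in a finitely generated group. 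This is how the paper argues.

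Your claim that $f$ permutes the components of $\overline{\Orbit_f(P)}$ also needs a small correction. Finitely many initial orbit points can lie in transient components, for instance isolated points. First replace $P$ by $f^M(P)$ with $M$ large enough that the decreasing chain $\overline{\Orbit_f(f^M(P))}$ has stabilized. On the stable set $f$ is surjective, since $f$ is proper, and hence permutes the components in one cycle. Discarding finitely many points does not affect density of the projections.
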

\begin{proof}
    Since an isogeny preserves Zariski density, the set $S$ is dense in $X$ if and only if $\f(S)$ is dense in $\prod_{i=1}^nA_i$. 
    Being in a finitely generated subgroup, having irreducible closure, being a finitely generated subgroup, and having dense image under $p_i$ are all preserved under an isogeny as well.
    Henceforth, we replace $X$, $\G$, and $S$ with $\prod_{i=1}^nA_i$, $\f(\G)$, and $\f(S)$, respectively, and we assume that $X\cong\prod_{i=1}^nA_i$ in the proof of part (a) and (b).

    We begin with part (a). 
    Let $S$ be a subset of $X(K)$ contained in a finitely generated subgroup $\Gamma\subseteq X(K)$ such that $\overline{S}$ is irreducible.
    If $\overline{S}=X$, then since the projection map $p_i$ is closed for all $i$, we have $\overline{p_i(S)}=p_i(\overline{S})=A_i$, and the image of $S$ is dense in all $A_i$.

    Conversely, suppose that $\overline{p_i(S)}=A_i$ for all $i$. 
    By Faltings's theorem \cite{faltings}, the set $\overline{S}\cap \G$ is a finite union of translates of subgroups of $\G$, and $S\subset \overline{S}\cap \G\subset\overline{S}$.
    Hence $\overline{S}=\overline{(\overline{S}\cap \G)}$, which is a union of translates of abelian subvarieties of $X$.
    
    In particular, since $\overline{S}$ is irreducible, it is a translate of an abelian subvariety of $X$ which maps surjectively under all $p_i$. 
    Note that the image of a translate under an endomorphism of abelian varieties is a translate of the image, hence the abelian subvariety also maps surjectively under all projections.
    We now show that the only abelian subvariety of $X$ that maps surjectively under $p_i$ for all $i$ is $X$ itself.
    
    For $n=1$, the product has 1 component and $\prod_{i=1}^nA_i=A_1$, and the projection map $p_1$ is the identity map. 
    The only abelian subvariety of $A_1$ that maps surjectively under $p_1$ is $X=A_1$.
    For $n>1$, we have
    \[\prod_{i=1}^nA_i=\left(\prod_{i=1}^{n-1}A_i\right)\times A_{n}.\]
    Let $B\subseteq \prod_{i=1}^nA_i$ be an abelian subvariety that maps surjectively under all projections.
    By the inductive hypothesis, the subvariety $B$ maps surjectively under the projection to $\prod_{i=1}^{n-1}A_i$ as well.
    Since $\prod_{i=1}^{n-1}A_i$ and $A_{n}$ have no isogenous factors, by Lemma \ref{goursatab}, the only possibility for the triple corresponding to $B$ is $(N,N',\g)$ where $N=\prod_{i=1}^{n-1}A_i$, $N'=A_n$, and the quasi-isogeny $\g\in \Hom^0((\prod_{i=1}^{n-1}A_i)/N, A_n/N')$ is trivial:
    \[\g:\left.\prod_{i=1}^{n-1}A_i\middle/\left(\prod_{i=1}^{n-1}A_i\right)\right.\to A_{n}/A_{n}.\] 
    Hence $B=\prod_{i=1}^nA_i$, and $\overline{S}= X$.
    This concludes the proof of (a)
    
    For part (b), let $S=\G$ be a finitely generated subgroup of $X(K)$.
    If $\G$ is dense, then since $p_i$ is closed, we find $\overline{p_i(\G)}=p_i(\overline{\G})=A_i$ for all $i$.
    
    Conversely, suppose that $\overline{p_i(\G)}=A_i$ for all $i$. 
    Let $G=\overline{\G}$ be the Zariski closure of $\G$. 
    $G$ is a closed algebraic subgroup of $X$.
    Let $G^\circ$ be its identity component, then $G^\circ$ is an abelian subvariety, and there exists $P_1,\ldots,P_k\in G$ such that
    \[G=G^\circ\cup (P_1+G^\circ)\cup \cdots \cup (P_k+G^\circ).\]
    Since $A_i$ is irreducible, we deduce that the abelian subvariety $G^\circ$ maps surjectively to all factors under $p_i$, and by the same argument as in part (a), we find $G^\circ=X$ and $\G$ is dense in $X$.

    For part (c), if $\overline{\mathcal{O}_f(P)}=X$, then since $p_i$ is closed, we have $\overline{p_i(\mathcal{O}_f(P))}=p_i(\overline{\mathcal{O}_f(P)})=A_i$, and the image of $\Orbit_f(P)$ under $p_i$ is dense in $A_i$ for all $i$.

Conversely, suppose that $\overline{p_i(\mathcal{O}_f(P))}=A_i$ for all $i$. 
Let $\overline{\mathcal{O}_f(P)}=Z$, then ${p_i(Z)}=A_i$ for all $i$.
Write $Z=\bigcup_{j=1}^k C_j$, where $C_j$ are the irreducible components of $Z$.
Note that $f$ permutes $\{C_j\mid 1\le j\le k\}$ transitively, for otherwise $Z$ would have been the invariant subset containing $P$.

We claim that all irreducible components project to all isotypic components surjectively, i.e., for all $i,j$, 
$p_i(C_j)=A_i$.

Let $1\le i\le n$.
Observe that $\bigcup_{j=1}^k p_i(C_j)=p_i(\bigcup_{j=1}^k C_j)=A_i$. 
Since $A_i$ is irreducible, there is a $C_\ell$ such that $p_i(C_\ell)=A_i$.
Furthermore, since $f$ splits as $f_1+\cdots+f_n$ where $f_i$ is the restriction of $f$ on $A_i$ by Lemma \ref{splitmap}, we find \[p_i(f(C_\ell))=f_i(p_i(C_\ell))=f_i(A_i)=A_i.\]
Then by transitivity of the action of $f$ on $\{C_j\mid 1\le j\le k\}$, all $C_j$ surject onto $A_i$.

By Fact 3.6 of \cite{ghiocascanlon}, the orbit $\mathcal{O}_f(P)$ is contained in a finitely generated subgroup $\Gamma\subset X$.
Then we may use part (a) on $C_1\cap \mathcal{O}_f(P)$ and conclude that $\overline{\mathcal{O}_f(P)}= X$.
\end{proof}

\begin{remark}\label{genus2}
    Note that both assumptions in part (a) of Lemma \ref{densityreduction} are needed for the statement to hold. 
    If we remove the assumption that $\overline{S}$ is irreducible, then for a product of non-isogenous elliptic curves $E_1\times E_2$, we can let $P_1\in E_1$ and $P_2\in E_2$,
    where 
    \[
    \overline{\{nP_1\mid n\in \ZZ\}}=E_1
    \quad\text{and}\quad 
    \overline{\{nP_2\mid n\in \ZZ\}}=E_2,
    \]
    and take 
    \[
    \Bigl(\{nP_1\mid n\in \ZZ\}\times \{0\}\Bigr)\cup \Bigl(\{0\}\times \{nP_2\mid n\in \ZZ\}\Bigr),
    \]
    which is contained in a finitely generated subgroup and has dense image under both projections but is not dense.

    On the other hand, if we remove the finite generation assumption, then for any genus 2 curve $C\subset E_1\times E_2$, the set $C(\Qbar)$ will be dense under both of the projection maps, as $C$ cannot live in any of the fibers and must surject to both elliptic curves.
    However, $C(\Qbar)$ is not dense in $E_1\times E_2$.
\end{remark}

\begin{lemma}\label{cyclicsubset}
    Let $K$ be a field of characteristic 0, $X/K$ an abelian variety, and $P\in X(K)$ a point. 
    Let $S=\{nP\mid n\in \ZZ\}$ be the subgroup generated by $P$ and suppose that $\overline{S}$ is connected.
    Then for any $T\subset S$ such that $\# T=\infty$, we have $\overline{T}=\overline{S}$.

    If $S$ is not necessarily connected, then there exists a point $c\in X(K)$ such that
    \[c+\overline{S}^\circ\subseteq \overline{T},\]
    where $\overline{S}^\circ$ is the identity component of $\overline{S}$.
\end{lemma}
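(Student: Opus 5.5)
The plan is to use Faltings's theorem, as in the proof of Lemma \ref{densityreduction}, on the closure of $T$. Set $G=\overline{S}$. It is a closed algebraic subgroup of $X$, and $G=\bigcup_{r=0}^{d-1}(rP+G^\circ)$ with $d=[G:G^\circ]$. Indeed $G/G^\circ$ is a finite group generated by the image of $P$, since $S$ is dense in $G$ and so meets every component. Let $T\subseteq S$ be infinite. Since $T$ lies in the finitely generated group $\Gamma=\ZZ P$, Faltings's theorem \cite{faltings} says that $\overline{T}\cap\Gamma$ is a finite union of cosets $\bigcup_j (a_j+\Gamma_j)$ with $\Gamma_j\le \Gamma$ subgroups. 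Because $T\subseteq \overline{T}\cap\Gamma\subseteq\overline{T}$, we get $\overline{T}=\bigcup_j \overline{a_j+\Gamma_j}$.

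The key step is to show that $T$ is infinite, so some coset $a_j+\Gamma_j$ must be infinite. The subgroups of $\ZZ P$ are of the form $\Gamma_j=\ZZ\cdot mP$, and an infinite one has $m\neq 0$. Hence $\overline{T}$ contains $a_j+\overline{\ZZ mP}$. So the problem reduces to comparing $\overline{\ZZ mP}$ with $G$ for a nonzero integer $m$.

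For this comparison, $[m]$ maps $G$ onto $\overline{mS}=\overline{\ZZ mP}$, since $[m]$ is closed and continuous, so $[m](\overline{S})=\overline{[m]S}$. The subgroup $\overline{\ZZ mP}$ is therefore a closed subgroup of $G$ of the same dimension, because $[m]$ is an isogeny on $X$ with finite kernel. Consequently it contains $G^\circ$: its identity component is an abelian subvariety of $G^\circ$ of equal dimension. This gives $a_j+G^\circ\subseteq\overline{T}$, which is the second statement with $c=a_j$. If $G$ is connected, then $G=G^\circ$, and $\overline{T}\subseteq G$ contains a translate $a_j+G$ with $a_j\in S\subseteq G$. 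Hence $\overline{T}=G=\overline{S}$, which is the first statement.

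The main obstacle is making sure Faltings's theorem applies in this setting. It is stated over fields of characteristic $0$ for subvarieties of abelian varieties intersected with finitely generated groups; I would cite it in the same form used in Lemma \ref{densityreduction}. One also has to handle the case where $P$ is torsion. Then $S$ is finite, no infinite $T$ exists, and the statement holds vacuously.
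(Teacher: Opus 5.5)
Your proof is correct and follows the paper's argument: Faltings's theorem applied to $\overline{T}\cap\ZZ P$, an infinite coset $a_j+\ZZ mP$ with $m\neq 0$, and the identity $\overline{\ZZ mP}=[m]\overline{S}$. The only difference is in the disconnected case. The paper first picks a component $P_j+\overline{S}^\circ$ containing infinitely many points of $T$, translates, and applies the connected case. You instead handle both cases at once by observing that $[m]\overline{S}$ contains $\overline{S}^\circ$ (your dimension argument, or directly $[m]\overline{S}^\circ=\overline{S}^\circ$). This is slightly cleaner, because it avoids checking that the translated set lies in a cyclic subgroup whose closure is connected, a point the paper leaves implicit.
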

\begin{proof}
    Since $S$ is a subgroup of $X$, its Zariski closure $\overline{S}$ is an algebraic group. 
    Furthermore, we have that $\overline{S}$ is connected, so $\overline{S}$ is an abelian subvariety.
    
    Consider the set $S\cap \overline{T}$. 
    By Falting's theorem, there exists $c_i\in S$ such that
    \[S\cap \overline{T}=\bigcup_{i=1}^k (c_i+S_i),\]
    where $S_i$ are subgroups of $S$.
    Since $T\subset S\cap \overline{T}$ is infinite, at least one of the $S_i$ is an infinite subgroup of $S$, which must be of the form $\{mnP\mid n\in\ZZ\}=[m]S$ for some non-zero integer $m$.
    Now observe that $\overline{[m]S}=[m]\overline{S}=\overline{S}$ because $\overline{S}$ is an abelian subvariety and $[m]$ is an isogeny, which is surjective and closed.
    Knowing that $c_j+[m]S\subset \overline{T}$ for some $j$ and $\overline{T}$ is closed, we deduce that $c_j+\overline{S}\subset \overline{T}\subset \overline{S}$, and therefore $c_j+\overline{S}=\overline{S}$ and $\overline{T}=\overline{S}$.

    Now we remove the assumption that $\overline{S}$ is connected.
    Since $S$ is a subgroup of $X$, its Zariski closure $\overline{S}$ is an algebraic group, hence of the form
    \[\overline{S}=\bigcup_{i=1}^k (P_i+\overline{S}^\circ),\]
    where $P_i\in \overline{S}$ and $\overline{S}^\circ$ is the identity component.
    Since $\#T=\infty$, there exists some $j$ such that $\#(T\cap (P_j+\overline{S}^\circ))=\infty$.
    Then we may apply the conclusion for the connected case to $T\cap (P_j+\overline{S}^\circ)-P_j$ and $S\cap \overline{S}^\circ$, which then implies 
    \[\overline{T\cap (P_j+\overline{S}^\circ)-P_j}= \overline{S}^\circ.\]
    Shifting by $P_j$, we obtain the desired containment
    \[P_j+\overline{S}^\circ =\overline{T\cap (P_j+\overline{S}^\circ)}\subseteq \overline{T}.\qedhere\]
\end{proof}

\begin{lemma}\label{indcoord}
    Let $B$ be a simple abelian variety over a field $K$ of characteristic~$0$.
    Let $\f: A\to B^n$ be an isogeny over $K$.
    
    Let $P\in A(K)$ and write $\f(P)=(P_1,\ldots,P_n)\in B^n(K)$. 
    If $\{P_1\otimes1,\ldots,P_n\otimes1\}$ is $\End^0(B)$-linearly independent in $B(K)\otimes_\ZZ \QQ$, then $\ZZ P=\{mP\mid m\in\ZZ\}$ is Zariski dense in $A$ and $\Ann(P)=0$ in $\End(A)$.
\end{lemma}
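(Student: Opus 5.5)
We first reduce to the case $A=B^n$ and $P=(P_1,\ldots,P_n)$. Density of $\ZZ P$ is preserved under the isogeny $\f$ in both directions, since $\f(\ZZ P)=\ZZ\f(P)$ and isogenies are finite surjective. For the annihilator, suppose $\alpha\in\End(A)$ satisfies $\alpha(P)=0$. Choose an isogeny $\psi:B^n\to A$ with $\psi\circ\f=[d]$. Then $\beta:=\f\circ\alpha\circ\psi\in\End(B^n)$ satisfies
\[\beta(\f(P))=\f(\alpha(dP))=d\,\f(\alpha(P))=0.\]
If we show $\Ann(\f(P))=0$ in $\End(B^n)$, then $\beta=0$, hence $\f\circ\alpha\circ\psi=0$. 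Since $\psi$ is surjective and $\f$ has finite kernel, $\alpha(A)$ is a connected subgroup of the finite group $\ker\f$, so $\alpha=0$. It therefore suffices to treat $Q:=\f(P)\in B^n(K)$.

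\textbf{Annihilator.} Write $D=\End^0(B)$, a division algebra since $B$ is simple, and identify $\End(B^n)\subseteq\Mat_n(\End(B))$. Let $\beta=(\beta_{ij})$ with $\beta(Q)=0$. Row $i$ of this equation gives $\sum_j\beta_{ij}(P_j)=0$ in $B(K)$. Tensoring with $\QQ$ yields $\sum_j\beta_{ij}\cdot(P_j\otimes1)=0$ in $B(K)\otimes\QQ$, where $D$ acts on $B(K)\otimes\QQ$ in the natural way. By the assumed $D$-linear independence, every $\beta_{ij}$ is $0$ in $D$. Since $\End(B)$ is torsion-free, $\beta_{ij}=0$ in $\End(B)$, so $\beta=0$.

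\textbf{Density.} Let $G=\overline{\ZZ Q}$, an algebraic subgroup of $B^n$, and let $G^\circ$ be its identity component. Since $[G:G^\circ]=m$ is finite, $mQ\in G^\circ$ and $\overline{\ZZ mQ}=G^\circ$. The point $mQ$ has coordinates $mP_i$, which are still $D$-independent, so we may replace $Q$ by $mQ$ and assume $G=G^\circ$ is an abelian subvariety. Suppose $G\neq B^n$. By Poincaré reducibility there is a nonzero $\beta\in\End(B^n)$ with $\ker\beta\supseteq G$: take a complement $G'$ with $G+G'=B^n$ and $G\cap G'$ finite, and compose the quotient $B^n\to B^n/G$ with the isogeny $B^n/G\to G'$ and the inclusion $G'\hookrightarrow B^n$. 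This composite is nonzero because $G'\neq0$. Then $\beta(Q)=0$, contradicting the annihilator statement just proved, so $G=B^n$.

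The only delicate step is making the translation between $\beta(Q)=0$ and a $D$-linear relation in $B(K)\otimes\QQ$ precise. One must check that the componentwise action of $\End(B)$ on $B(K)$ extends to a left $D$-module structure on $B(K)\otimes\QQ$ that is compatible with the matrix description of $\End(B^n)$. This is a routine check.
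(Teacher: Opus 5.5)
Your proof is correct and follows essentially the same route as the paper: you reduce the annihilator claim to $\End(B^n)=\Mat_n(\End(B))$ by conjugating with isogenies, and you use Poincar\'e reducibility to produce a nonzero map killing the identity component of $\overline{\ZZ Q}$, then contradict $\End^0(B)$-linear independence. The only difference is organizational: you derive density from the trivial annihilator, and a nonzero row of your $\beta$ is exactly the paper's nonzero $q:B^n\to B$, whereas the paper proves density directly before treating the annihilator.
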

\begin{proof}
    Let $P\in A(K)$ and $\f(P)=(P_1,\ldots,P_n)\in B^n(K)$ such that
    $P_1\otimes1,\ldots,P_n\otimes1$ are $\End^0(B)$-linearly independent in $B(K)\otimes_\ZZ \QQ$.
    
    Let $Z$ be the Zariski closure of $\ZZ P$, and suppose for the sake of contradiction that $Z\subsetneq A$.
    The subvariety $Z$ is an algebraic subgroup of $A$, hence a finite disjoint union of translates of its identity component $Z^\circ$.
    Note that $Z^\circ$ is a proper abelian subvariety of $A$ because $Z$ is a proper subvariety of $A$.
    Since $P\in Z$, there exists an integer $k>0$ such that $kP\in Z^\circ$.
    
    Since $\f$ is an isogeny, we have $\f(Z^\circ)$ is a proper abelian subvariety of $B^n$ .
    Then by the Poincar\'e reducibility theorem, there exists a nonzero homomorphism $q:B^n\to B$ of abelian varieties such that $\f(Z^\circ) \subseteq \ker(q)$.
    We can extend scalars to $\QQ$ and use Poincar\'e to get that $\Hom^0(B^n,B)\cong\prod_{i=1}^n \End^0(B)$, so viewing $q$ in $\Hom^0(B^n,B)$, we have $q=q_1+\cdots +q_n$ where $q_i\in \End^0(B)$ and not all zero.
    Evaluating $q$ on $\f(kP)$, we get
    \begin{align*}
        q(\f(kP))=&q_1(kP_i\otimes1)+\cdots +q_n(kP_n\otimes1)\\
        =&kq_1(P_1\otimes1)+\cdots +kq_n(P_n\otimes1),
    \end{align*}
    which contradicts the points $P_1\otimes1,\ldots,P_n\otimes1$ being $\End^0(B)$-linearly independent.
    Therefore $Z=A$ and $\ZZ P$ is Zariski dense in $A$.

    Now we show that $P$ has trivial annihilator in $\End(A)$.
    Let $g\in \End(A)$ such that $g(P)=0$.
    Let $\f^\vee: B^n\to A$ be the isogeny dual to $\f$ such that $\f\circ \f^\vee = [k]_{B^n}$ and $\f^\vee\circ \f = [k]_{A}$ for some integer $k\ge 1$.
    Then $g'=\f\circ g\circ \f^\vee\in \End(B^n)$ satisfies that \[g'(\f(P))=\f\circ g\circ \f^\vee(\f(P))=\f(g(kP))=\f(kg(P))=\f(0)=0.\]
    Note that $\End(B^n)\cong \Mat_n(\End(B))$.
    Since the coordinates of $\f(P)$ are $\End^0(B)$-linearly independent by assumption, the equality $g'(\f(P))=0$ implies that the linear combinations at all coordinates are trivial.
    Therefore the matrix representing $g'$ is 0 and $g'=0$, which then implies that $g=0$ as $\f$ and $\f^\vee$ are isogenies.
\end{proof}

\section{Proof of the main theorem}

The last ingredient in the proof of the main theorem is a linear algebra fact on affine maps, as $X(K)$ is a finitely generated $\ZZ$-module and every regular self-map $f$ acts as an affine map on $X(K)$.

\begin{proposition}\label{unipq}
    Let $V$ be a finite dimensional $\QQ$-vector space and $\{P,Q\}$ be linearly independent vectors in $V$.
    Let $U=\id+N$ be a unipotent linear map where $N$ is a nilpotent linear map.
    Let $f:V\to V$ be an affine map of the form $f(x)=U(x)+Q$.
    Then at most $2$ multiples of $P$ can be in the same grand $(f,\QQ)$-orbit.
    That is, the map $n\mapsto \grano_{f,\QQ}(nP)$ is at most $2$-to-$1$.
\end{proposition}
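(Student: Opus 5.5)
The first step is to simplify what "same grand orbit" means. Since $U=\id+N$ is invertible, $f$ is a bijection of $V$. So if $f^a(x)=f^b(y)$ with $a\ge b$, then $f^{a-b}(x)=y$. The grand $(f,\QQ)$-orbit of $x$ is therefore just the two-sided orbit $\{f^k(x)\mid k\in\ZZ\}$. Equation (\ref{compute}) gives $f^k(x)=U^k x+\sum_{i=0}^{k-1}U^iQ$. Because $N$ is nilpotent, both terms are polynomial in $k$, and this extends to negative $k$:
\[
f^k(x)=\sum_{j\ge 0}\binom{k}{j}N^jx+\sum_{j\ge 0}\binom{k}{j+1}N^jQ .
\]
Hence $nP$ and $mP$ lie in the same grand orbit if and only if there is $k\in\ZZ$ with
\[
mP=nP+\sum_{j\ge1}\binom{k}{j}N^j(nP)+kQ+\sum_{j\ge1}\binom{k}{j+1}N^jQ .
\]

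Next I would fix one grand orbit and assume, for contradiction, that it contains three distinct multiples $n_0P,n_1P,n_2P$. Then $n_1P=f^{k_1}(n_0P)$ and $n_2P=f^{k_2}(n_0P)$ with $k_1,k_2$ distinct and nonzero. If the differences lay in the span of $P$ and $Q$ alone, I would separate the coordinates using a functional $\lambda$ with $\lambda(P)=0$ and $\lambda(Q)=1$, and a functional $\mu$ with $\mu(P)=1$ and $\mu(Q)=0$; these exist by linear independence. Applying $\lambda$ to the displayed relation gives a polynomial identity in $k$:
\[
0=k+n\,\alpha(k)+\beta(k),\qquad \alpha(k)=\sum_{j\ge1}\binom{k}{j}\lambda(N^jP),\quad \beta(k)=\sum_{j\ge1}\binom{k}{j+1}\lambda(N^jQ).
\]
Applying $\mu$ expresses $m-n$ as a similar polynomial. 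The plan is to use the whole vector identity, not just these two functionals, to pin down $k$ from $n$ and $m$.

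The tool I would use is the filtration $V\supseteq NV\supseteq N^2V\supseteq\cdots$, peeling off the top-degree terms in $k$. In the quotient $V/NV$ the relation reads $(m-n)\bar P=k\bar Q$. Suppose $\bar P$ and $\bar Q$ are independent there. Then $k=0$ and $m=n$, so the map $n\mapsto\grano_{f,\QQ}(nP)$ is injective. Otherwise $\bar P$ and $\bar Q$ are dependent modulo $NV$, and I would pass to the next graded piece $NV/N^2V$. There the relation picks up a term quadratic in $k$, namely $\binom{k}{2}NQ$, together with the term $nkNP$. I expect to show that the combined system forces $k$ to satisfy a nonzero polynomial of degree at most two whose coefficients depend only on the orbit. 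It would then have at most two integer solutions besides the trivial one, which is where the bound $2$ should come from.

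The main obstacle is this degenerate case: showing that the relevant polynomial in $k$ cannot vanish identically, and that its degree really is at most two rather than growing with the nilpotency index of $N$. Independence of $P$ and $Q$ in $V$ itself, not just modulo $NV$, must be what rules out identical vanishing. I would try induction on the nilpotency index, applying the argument to the induced unipotent map on $V/N^{r-1}V$. The remaining work is to check that the grand orbit relation descends to that quotient and that the count of multiples does not increase when the quotient is lifted back to $V$.
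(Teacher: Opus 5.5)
Your proposal proves only the easy case, where $\bar P$ and $\bar Q$ are independent in $V/NV$; there $n\mapsto\grano_{f,\QQ}(nP)$ is even injective. All the content of the proposition lies in the degenerate case, and there you only describe what you expect to happen. The mechanism you sketch does not work as stated:
\begin{itemize}
\item The relation $f^k(nP)=mP$ is a vector polynomial identity in $k$ whose degree can reach the nilpotency index of $N$. Nothing in the filtration argument caps it at two.
\item The induction through $V/N^{r-1}V$ fails because independence of $P$ and $Q$ is not inherited by the quotient, so there is no inductive hypothesis to invoke. The paper's sharpness example shows this: there $N^2=0$ and $P-Q=(1,-1)\in NV$. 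So in $V/NV$ one has $\bar P=\bar Q$, and the induced map is $x\mapsto x+\bar P$, which puts every multiple of $\bar P$ in a single orbit.
\item Lifting back to $V$ is not the problem, since orbits in $V$ map into orbits of the quotient. The problem is that the quotient gives no finite bound at all.
\item Even your anticipated endpoint, a quadratic in $k$ with ``at most two integer solutions besides the trivial one,'' would allow three multiples per orbit, not two.
\end{itemize}

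The missing idea is to replace the filtration $N^iV$ by the $N$-cyclic subspace generated by one vector. Fix $w=n_0P$ and put $u=f(w)-w=Nw+Q$. Your formula for $f^k$ then rearranges to
\[
f^k(w)-w=\sum_{j\ge1}\binom{k}{j}N^{j-1}u\qquad(k\in\ZZ).
\]
Suppose $f^{k_i}(w)=n_iP$ for distinct nonzero $k_1,k_2$. Then $f^{k_1}(w)-w$ and $f^{k_2}(w)-w$ are nonzero multiples of $P$, hence proportional. If $Nu\neq0$, the nonzero vectors among $u,Nu,N^2u,\ldots$ are linearly independent. Comparing coefficients of $u$ and $Nu$ forces $k_1\binom{k_2}{2}-k_2\binom{k_1}{2}=\tfrac12 k_1k_2(k_2-k_1)=0$, which is impossible. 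So $Nu=0$ and $f^k(w)=w+ku$. Then $u=\lambda P$ with $\lambda=(n_1-n_0)/k_1\neq0$, so $Nu=0$ gives $NP=0$, hence $u=n_0NP+Q=Q$. This contradicts the independence of $P$ and $Q$. This is the paper's argument in inhomogeneous form. The paper homogenizes $f$ to the linear unipotent map $F(v+te)=U(v)+tQ+te$ on $V\oplus\QQ e$. It then gets $\tilde N^2(cP+e)=0$ because $cP+e$, $F^r(cP+e)$ and $F^s(cP+e)$ all lie in the two-dimensional space $\Span(P,e)$.
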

\begin{proof}
    Suppose there exists distinct rational numbers $a,b,c$ and integers $n_a<n_b<n_c$ such that $f^{n_a}(aP)=f^{n_b}(bP)=f^{n_c}(cP)$.
    Since $f$ is a bijection, we may apply iterates of $f^{-1}$ to the chain of equalities and relabel them with $0<r=n_c-n_b<s=n_c-n_a$ to get 
    \[bP=f^{r}(cP),\quad aP=f^{s}(cP).\]
    
    We will construct a homogenization $F$ of $f$ that is linear and unipotent, and we will show that the images of $P$ under $3$ distinct iterates of $F$ cannot be trapped in the subspace corresponding to $\Span(P)$ in the homogenization.

    Let $\Tilde{V}=V\oplus e\QQ$ and define the linear map $F:\Tilde{V}\to \Tilde{V}$ by $F(v+te)=U(v)+tQ+te$ for any $v+te\in \Tilde{V}$.
    Note that $F(v+e)=f(v)+e$ for all $v\in V$. 
    In other words, the restriction of $F$ on the affine subspace of $V$ shifted by $e$ is precisely $f$.

    Let $\Tilde{N}:\Tilde{V}\to \Tilde{V}$ be given by
    \[\Tilde{N}=F-\id,\quad \Tilde{N}(v+te)=N(v)+tQ.\]
    Note that $\Tilde{N}$ is nilpotent, because 
    \[\Tilde{N}^m(v+te)=N^m(v)+tN^{m-1}(Q),\]
    which equals 0 for $m$ sufficiently large, as $N$ is nilpotent.

    Write $\Tilde{w}=cP+e$ and homogenize to get
    \begin{equation}\label{iterF}
        bP+e=F^{r}(\Tilde{w}),\quad aP+e=F^{s}(\Tilde{w}),
    \end{equation}
    which implies that \[\Tilde{w},F^{r}(\Tilde{w}),F^{s}(\Tilde{w})\in \Span(P,e),\]
    so they are linearly dependent.

    We claim that for any nilpotent map $M$ and vector $w$ such that $M^k(w)=0$ and $M^{k-1}(w)\ne 0$ for some positive integer $k$, the set $\{w, M(w),\ldots, M^{k-1}(w)\}$ is linearly independent.
    Suppose that \[\sum_{i=0}^{k-1} c_iM^i(w)=0\]
    is a nontrivial linear relation of the set.
    Let $n$ be the minimal index for nonzero coefficients in the linear relation and apply $M^{k-1-n}$ to the linear relation, then we get
    \[c_nM^{k-1}(w)=0,\]
    contradicting the assumption that $M^{k-1}(w)$ is nonzero.

    Next we claim that $\Tilde{N}^2(\Tilde{w})=0$. 
    Suppose not, then $\{\Tilde{w}, \Tilde{N}(\Tilde{w}),\Tilde{N}^2(\Tilde{w})\}$ is linearly independent by the claim.
    At the same time, we find 
    \[F^n(\Tilde{w})=(\Tilde{N}+\id)^n(\Tilde{w})=\sum_{i=0}^n{n\choose i}\Tilde{N}^i(\Tilde{w}).\]
    In particular, we compute: 
    \begin{align*}
        \Tilde{w}=&\Tilde{w},\\
        F^r(\Tilde{w})=&\Tilde{w}+r\Tilde{N}(\Tilde{w})+{r\choose 2}\Tilde{N}^2(\Tilde{w})+(\text{higher order terms}),\\
        F^s(\Tilde{w})=&\Tilde{w}+s\Tilde{N}(\Tilde{w})+{s\choose 2}\Tilde{N}^2(\Tilde{w})+(\text{higher order terms}),
    \end{align*}
    and the coefficients of $\Tilde{w}, \Tilde{N}(\Tilde{w}),\Tilde{N}^2(\Tilde{w})$ imply that the set $\{\Tilde{w},F^{r}(\Tilde{w}),F^{s}(\Tilde{w})\}$ is linearly independent, contradicting the earlier observation that they are in the two dimensional span $\Span(P,e)$.

    Therefore $\Tilde{N}^2(\Tilde{w})=0$. 
    Then \[F^n(\Tilde{w})=\Tilde{w}+n\Tilde{N}(\Tilde{w}),\]
    and we can substitute the equalities into equation (\ref{iterF}) to get
    \[bP+e=F^{r}(\Tilde{w})=(cP+e)+r(\Tilde{N}(\Tilde{w})),\]
    which implies that
    \begin{equation}\label{NwinP}
        \Tilde{N}(\Tilde{w})=\frac{b-c}{r}P.
    \end{equation}
    Apply $\Tilde{N}$ to both sides, we get 
    $0=\Tilde{N}(p)$, so $N(p)=0$.
    Now we recall the definition of $\Tilde{N}$ and find that
    \begin{equation}\label{NwinQ}
        \Tilde{N}(\Tilde{w})=\Tilde{N}(cP+e)=N(cP)+Q=Q.
    \end{equation}
    Combining equations (\ref{NwinP}) and (\ref{NwinQ}), we have found a nontrivial linear relation 
    \[\frac{b-c}{r}P=Q
    \]
    between $P$ and $Q$, which is a contradiction.
    Therefore there cannot be 3 distinct multiples of $P$ in the same grand $(f,\QQ)$-orbit.
\end{proof}

\begin{remark}
    The bound 2 is sharp in Proposition \ref{unipq}, as illustrated by the following example.
    
    Let $V=\QQ^2$, $P=(1,0)$, $Q=(0,1)$, and 
    \[N=\begin{bmatrix}
        1 & 1\\
        -1& -1
    \end{bmatrix}.\]
    Then we find
    \[f(P)=(\id+N)(P)+Q=(1,0)+(1,-1)+(0,1)=(2,0)=2P.\]
\end{remark}

\begin{corollary}\label{unipz}
    Let $M$ be a finitely generated $\ZZ$-module and $\{P,Q\}$ be $\ZZ$-linearly independent.
    Let $f:M\to M$ be a map of the form $f(x)=U(x)+Q$, where $U=\id+N$ is a unipotent $\ZZ$-linear map with $N$ a nilpotent $\ZZ$-linear map and $Q$ is non-torsion.
    Then infinitely many multiples of $P$ are in distinct grand $(f,\ZZ)$-orbits.
\end{corollary}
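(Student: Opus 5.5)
The plan is to pass to the rational vector space $V=M\otimes_\ZZ\QQ$ and apply Proposition \ref{unipq} there. The maps $U$ and $N$ extend $\QQ$-linearly to $U_\QQ=\id+N_\QQ$ on $V$, with $N_\QQ$ still nilpotent, so $f_\QQ(x)=U_\QQ(x)+(Q\otimes1)$ is an affine map of the form required by Proposition \ref{unipq}. We write $\iota:M\to V$, $x\mapsto x\otimes1$, for the natural map. A $\ZZ$-linear relation between $P$ and $Q$ must be nontrivial, so $\ZZ$-linear independence of $\{P,Q\}$ forces both to be non-torsion, and their images $P\otimes1$, $Q\otimes1$ are $\QQ$-linearly independent in $V$. (This is presumably why the hypothesis that $Q$ is non-torsion is included.)

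Next we check that $\iota$ intertwines the dynamics: $\iota\circ f=f_\QQ\circ\iota$. This holds because $\iota(U(x)+Q)=U_\QQ(\iota x)+Q\otimes1$. Consequently, if $mP$ and $m'P$ lie in the same grand $(f,\ZZ)$-orbit, say $f^n(mP)=f^k(m'P)$, then applying $\iota$ gives $f_\QQ^n(m(P\otimes1))=f_\QQ^k(m'(P\otimes1))$. So $m(P\otimes1)$ and $m'(P\otimes1)$ lie in the same grand $(f_\QQ,\QQ)$-orbit. The map $\iota$ is injective on $\ZZ P$ because $P$ is non-torsion, so distinct integers $m$ give distinct vectors $m(P\otimes1)$.

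By Proposition \ref{unipq}, at most two of the vectors $m(P\otimes1)$ with $m\in\ZZ$ lie in any single grand $(f_\QQ,\QQ)$-orbit. By the previous step, at most two of the points $mP$ therefore share a grand $(f,\ZZ)$-orbit. So the map $m\mapsto\grano_{f,\ZZ}(mP)$ from $\ZZ$ is at most $2$-to-$1$ and has infinite image. Choosing one integer $m$ from each fiber yields infinitely many multiples of $P$ lying in pairwise distinct grand $(f,\ZZ)$-orbits. One could also make this explicit, for instance by taking a suitable infinite subsequence, but the fiber-counting argument already suffices.

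The main thing to verify carefully is that the hypotheses transfer to $V$. Specifically, nilpotence of $N_\QQ$ is immediate, since $N^k=0$ implies $N_\QQ^k=0$, and the linear independence of $P\otimes1$ and $Q\otimes1$ over $\QQ$ follows by clearing denominators in any $\QQ$-relation to get a $\ZZ$-relation. Proposition \ref{unipq} is stated for all rational multiples of $P$, so restricting it to integer multiples is harmless. I do not expect any real obstacle beyond this bookkeeping.
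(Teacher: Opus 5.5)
Your proposal is correct and follows the paper's proof essentially step for step: tensor with $\QQ$, check that $x\mapsto x\otimes 1$ intertwines $f$ and $f_\QQ$ so that grand $(f,\ZZ)$-orbits land in grand $(f_\QQ,\QQ)$-orbits, apply Proposition \ref{unipq}, and finish by pigeonhole. You are slightly more careful than the paper in checking that $P\otimes 1$ and $Q\otimes 1$ are $\QQ$-linearly independent and that $m\mapsto m(P\otimes 1)$ is injective. One small fix there: clearing denominators only gives that $aP+bQ$ is torsion, not zero, so you must multiply once more by its order before invoking $\ZZ$-linear independence.
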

\begin{proof}
    Let $f_\QQ:M\otimes_\ZZ \QQ\to M\otimes_\ZZ \QQ$ be given by $f_\QQ(x)=U_\QQ(x)+Q\otimes 1$, where $U_\QQ:M\otimes_\ZZ \QQ\to M\otimes_\ZZ \QQ$ is the natural $\QQ$-linear map obtained from $U$.
    Since $Q$ is non-torsion, we know $Q\otimes1$ is nonzero.
    Note that the following diagram commutes:
    \begin{center}
        \begin{tikzcd}
        M \arrow[d, "\id\otimes 1"] \arrow[r, "f"] & M \arrow[d, "\id\otimes 1"] \\
        M\otimes_\ZZ \QQ \arrow[r, "f_\QQ"]        & M\otimes_\ZZ \QQ           
        \end{tikzcd}
    \end{center}
    Let $x\in M$, then 
    \begin{align*}
        \id\otimes 1(f(x))&=(U(x)+Q)\otimes 1 = U(x)\otimes 1+Q\otimes 1=U_\QQ(x\otimes 1)+Q\otimes 1,\\
        f_\QQ(\id\otimes 1(x))&=f_\QQ(x\otimes 1)=U_\QQ(x\otimes 1)+Q\otimes 1.
    \end{align*}
    If follows that if $x_1,x_2\in M$ are in the same grand $(f,\ZZ)$-orbit, then $x_1\otimes 1, x_2\otimes 1$ are in the same grand $(f_\QQ,\QQ)$-orbit.

    By Lemma \ref{unipq}, for any $P\in M$, at most 2 distinct multiples of $P\otimes 1\in M\otimes_\ZZ\QQ$ can be in the same grand $(f_\QQ,\QQ)$-orbit, so at most 2 distinct multiples of $P$ can be in the same grand $(f,\ZZ)$-orbit, and the corollary follows from pigeonhole principle.
\end{proof}

\begin{theorem}[Theorem~\ref{main} (restated)]
Let $X/\Qbar$ be an abelian variety, and let $f:X\to X$ be an endomorphism of~$X$ (as an abstract variety) such that there is a point in $X(\Qbar)$ with $f$-orbit that is Zariski dense in $X$.
Then there is a number field~$K/\QQ$ such that~$X$ and~$f$ are defined over~$K$, and $(f,K)$ is weakly dense orbit transversal.
\end{theorem}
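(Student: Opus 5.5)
The plan is to reduce to a product of isotypic pieces with a split self-map, build a dense transversal on each piece out of multiples of a single point, and then glue the pieces together.

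\textbf{Reduction.} Choose $K$ large enough that $X$, $f$, the isotypic decomposition, the relevant isogenies, and a point $P_0$ with dense $f$-orbit are all defined over $K$. By Lemma \ref{splitmap}, $f$ lifts along the summation isogeny $\prod_i A_i\to X$ to a split map $f'=(f_1,\ldots,f_k)$. By Lemma \ref{isoginv}(a),(b), it suffices to treat $f'$ on $\prod A_i$. The orbit of a lift of $P_0$ is dense, so by Lemma \ref{densityreduction}(c) each $f_i$ has a point with dense orbit on $A_i$.

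\textbf{Structure of each $f_i$.} Write $f_i=\t_i+c_i$. I would use the known structure (as in Pasten--Silverman / Ghioca--Scanlon) of maps with a dense orbit. If $\t_i$ has no eigenvalue that is a root of unity, then $f_i$ is conjugate by a translation to $\t_i$, and dense orbits force $\t_i$ to be "large". Otherwise, after passing to an iterate via Lemma \ref{iterinv} and a further isogeny splitting, the relevant part has $\t_i$ unipotent and translation $c_i$ non-torsion. The subtle point is that this splitting uses the connected component of $\ker(\t_i^n-1)^{\infty}$, which may be coarser than the isotypic decomposition, so I would rather decompose $X$ up to isogeny as $Y_u\times Y_{nu}$, with $\t$ unipotent on $Y_u$ and $1-\t$ an isogeny on $Y_{nu}$. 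On $Y_{nu}$ one conjugates away the translation, so $f$ is a homomorphism there.

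\textbf{Construction of the transversal.} On each isotypic block $A\sim B^n$, pick $P\in A(K)$ whose coordinates are $\End^0(B)$-independent. This may require enlarging $K$ so that the Mordell--Weil rank is large, which is possible by choosing a field over which $B$ has high rank, or alternatively by working with $K$-points supplied by the dense orbit. By Lemma \ref{indcoord}, $\ZZ P$ is dense and $\Ann(P)=0$. Now take the candidate set $S=\{mP: m\in\ZZ\}$ for a suitable $P$ on the whole product, chosen with all isotypic projections of this kind. In the unipotent case, Corollary \ref{unipz} (applied with $Q=c$, $P$ independent of $c$) shows that the map $m\mapsto$ grand orbit of $mP$ is at most $2$-to-$1$. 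In the non-unipotent homomorphism case, $\t^a(mP)=\t^b(m'P)$ forces $(m\t^a-m'\t^b)P=0$, so $m\t^a=m'\t^b$ in $\End$ by the trivial annihilator; comparing degrees or norms shows that only finitely many $m'$ match each $m$, and in fact the matching is boundedly many-to-one. By Lemma \ref{projorbit}, distinctness in one factor suffices for distinctness in the product. Hence infinitely many multiples $mP$ lie in distinct grand orbits. By Lemma \ref{cyclicsubset}, this infinite subset $T$ has closure containing a translate of $\overline{\ZZ P}^\circ$, which is all of $X$ once $\ZZ P$ is dense and we pass to $[e]P$ so that the closure is connected. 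Completing $T$ to a transversal then gives W-DOT.

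\textbf{Main obstacle.} The hard step is the simultaneous synchronization across factors. We need a single infinite set of multiples that is distinct in the product grand orbit, while the bounded many-to-one property must hold in at least one factor that already sees all the distinctness. I would resolve this by applying the finiteness argument to one nontrivial factor only, using Lemma \ref{projorbit}, and getting density from $\ZZ P$ rather than factorwise. The remaining technical worry is the case where no factor is non-preperiodic in a usable way. For example, $\t$ might be of finite order times unipotent with $c$ torsion; but then no orbit is dense, so this case is excluded.
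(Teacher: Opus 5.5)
Your skeleton matches the paper's. You lift to a product via Lemmas \ref{splitmap} and \ref{isoginv}, split off (after an iterate) a unipotent part and a part on which $1-\tau$ is an isogeny, and choose $P$ with $\End^0(B)$-independent isotypic coordinates so that $\ZZ P$ is dense. You then detect distinctness in one factor via Lemma \ref{projorbit}, use Corollary \ref{unipz} on the unipotent part, and get density from Lemma \ref{cyclicsubset}.

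\textbf{The gap is in the non-unipotent homomorphism case.} From $m\tau^a=m'\tau^b$ in $\End$ you assert that comparing degrees leaves only finitely many $m'$ for each $m$, so that $m\mapsto$ (grand orbit of $mP$) is boundedly many-to-one. This is false. For $X=E$ an elliptic curve, $f=[2]$ and $P$ of infinite order, the points $P,2P,4P,\dots$ all lie in one grand orbit. Degrees only give $|m'/m|^{2g}=(\deg\tau)^{a-b}$, with $g$ the dimension of the factor, and this does not bound $m'$ once $\deg\tau>1$. So your pigeonhole step fails here. The case cannot be avoided by using another factor: when the unipotent part is trivial (as for $[2]$ on $E$), the non-unipotent part is all there is.

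\textbf{The missing idea is to exploit the collision itself.} Suppose $m\neq m'$ and $m\tau^a=m'\tau^b$. Then $a\neq b$, since $\tau$ is an isogeny. Because $\tau$ is invertible in $\End^0$, you get $\tau^{|b-a|}=[\lambda]$ with $\lambda\in\QQ$. Integrality of $\tau$ forces $\lambda\in\ZZ$, and the absence of root-of-unity eigenvalues forces $|\lambda|\geq 2$. This gives a dichotomy:
\begin{enumerate}
\item No two distinct multiples of $P$ collide, and $\ZZ P$ itself is the required set.
\item After replacing $f$ by $f^{|b-a|}$ via Lemma \ref{iterinv}, the map on that factor is $[\kappa]$ with $|\kappa|\geq 2$. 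Then $\{mP:\gcd(m,\kappa)=1\}$ lies in pairwise distinct grand orbits by unique factorization, and it is dense by Lemma \ref{cyclicsubset}.
\end{enumerate}
This is precisely how the paper handles its first case.

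A smaller point: if in a mixed product you run Corollary \ref{unipz} on the unipotent factor, you need the projection of $P$ there to be independent of that factor's translation. Independence of $P$ and $c$ in $X(K)$ does not imply this, so choose $P$ accordingly.
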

\begin{remark}
    To prove the theorem, we will construct a dense $(f,K)$-transversal for $X(K)$.
    In fact, it suffices to construct a set $S\subseteq X(K)$ such that $S$ is Zariski dense in $X$ and all points in $S$ are in distinct grand $(f,K)$-orbits.
    Then $S$ can be completed to an $(f,K)$-transversal.
    
    To do so, we will use Lemma \ref{densityreduction} (b) and Lemma \ref{indcoord} to choose a suitable point $P\in X(K)$ such that $\ZZ P=\{mP\mid m\in \ZZ\}$ is dense in $X$, which implies that $\overline{\ZZ P}$ is connected.
    Then by Lemma \ref{cyclicsubset}, any infinite subset of $\ZZ P$ will have the same Zariski closure as $\ZZ P$, and hence be dense in $X$. 
    This reduces the problem to finding an infinite subset $S\subseteq \ZZ P$ such that all points in $S$ are in distinct grand $(f,K)$-orbits, which will be resolved in a 2-case analysis.
\end{remark}
\begin{proof}
    By Lemma \ref{isoginv} and Lemma \ref{splitmap}, the assumption we are using and the property we are trying to prove are isogeny invariant, and we can always construct a lift of $f$ to a product of isotypic components isogenous to $X$.
    Therefore we may and do assume that $X$ is isomorphic to a product of its isotypic components.
    That is, $X\cong A_1\times \cdots \times A_n$, where $A_i$ is isogenous to $B_i^{k_i}$ for some simple abelian variety $B_i$, and $B_i$ is not isogenous to $B_j$ for $i\ne j$.
    Then $f$ decomposes as $(f_1,\dots ,f_n)$, where $f_i:A_i\to A_i$ and $f(x_1,\dots,x_n)=(f_1(x_1),\dots, f_n(x_n))$, and we may apply Lemma \ref{projorbit} to reduce the problem of finding points in distinct grand $(f,K)$-orbits in $X(K)$ to finding points in distinct grand $f_i$-orbits in $A_i(K)$ for an $i$.

    Following the argument in section~6 of \cite{ghiocascanlon}, we decompose each $A_i$ into $A_{i,1}+A_{i,2}$ as follows:
    Since $f$ is dominant, by the rigidity theorem, the map $f_i$ can be written as an isogeny $\t_i$ composed with a translation by $y_i\in A_i$, so $f_i(x)=\t_i(x)+y_i$.
    Up to replacing $f$ by $f^k$ for some appropriate $k$, which is allowed by Lemma \ref{iterinv}, we may assume that 
    \[\dim(\ker (\t_i^m-\id))=\dim(\ker (\t_i-\id))\]
    for all $i$.
    That is, the only eigenvalues of $\t_i$ that are roots of unity are now~1 for all $i$.
    Let $g_i\in \ZZ[t]$ be the minimal polynomial of $\t_i$, and let $r_i$ be the order of vanishing of~1 in $g_i$, then we may write $g_i(t)=g_i'(t)\cdot (t-1)^{r_i}$.
    Let $A_{i,1}=(\t_i-\id)^{r_i}(A_i)$ and $A_{i,2}=g_i'(\t)(A_i)$.
    
    By Lemma 6.1 and subsequent discussions in \cite{ghiocascanlon}, we know that $A_i=A_{i,1}+A_{i,2}$, where $A_{i,1}\cap A_{i,2}$ is finite, and up to conjugation by a translation we may assume that $f_i$ restricted to $A_{i,1}$ is an isogeny $\t_{i,1}:A_{i,1}\to A_{i,1}$ whose eigenvalues are not roots of unity, and $f_i$ restricted to $A_{i,2}$ is a unipotent isogeny $\t_{i,2}$ composed with a translation by $y_{i,2}\in A_{i,2}$, so $f_i|_{A_{i,2}}(x)=\t_{i,2}(x)+y_{i,2}$.
    We denote the restrictions as follows:
    \begin{align*}
        \s_{i,1}&:=f_i|_{A_{i,1}}:A_{i,1}\to A_{i,1},\ \s_{i,1}(x)=\t_{i,1}(x),\\
        \s_{i,2}&:=f_i|_{A_{i,2}}:A_{i,2}\to A_{i,2},\ \s_{i,2}(x)=\t_{i,2}(x)+y_{i,2}.
    \end{align*}
    
    Furthermore, for any $x\in A_i$, if we write it as $x_1+x_2$, possibly not uniquely, where $x_1\in A_{i,1}$ and $x_2=A_{i,2}$, then $f_i(x)=\s_{i,1}(x_1)+\s_{i,2}(x_2)$.
    This gives us the commutative diagram
\begin{center}
\begin{tikzcd}
{A_{i,1}\times A_{i,2}} \arrow[r, "f_i'"] \arrow[d, "\phi_i"] & {A_{i,1}\times A_{i,2}} \arrow[d, "\phi_i"] \\
A_i \arrow[r, "f_i"]                                       & A_i                                     
\end{tikzcd}
    \end{center}
    where $f_i'(x_1,x_2)=(\s_{i,1}(x_1),\s_{i,2}(x_2))$ and $\phi$ is an isogeny given by taking the quotient by a copy of $A_{i,1}\cap A_{i,2}$.
    Again by Lemma \ref{isoginv}, we may assume that $A_i\cong A_{i,1}\times A_{i,2}$, which reduces the problem of finding points in distinct grand $f_i$-orbits in $A_i$ to finding points in distinct grand $\s_{i,1}$-orbits in $A_{i,1}$ or distinct grand  $\s_{i,2}$-orbits in $A_{i,2}$ by Lemma \ref{projorbit}.

    Now we analyze the problem in~2 cases depending on the decomposition of the $A_i$'s.
    The first case is when there exists an $e$ such that $A_{e,1}$ is nontrivial.
    Recall that we are assuming
    $X\cong A_{1}\times \dots \times  A_{n}$, where $A_i$ are the isotypic components of $X$, and by the Poincar\'e reducibility theorem, the abelian variety $X$ is isogenous to 
    $B_1^{k_{1}}\times \dots \times B_n^{k_{n}}$, where $B_i$ is simple for all $i$, and $B_i$ is not isogenous to $B_j$ for $i\ne j$.
    
    Let $P\in X(K)$ be a point such that its image in 
    $B_i^{k_i}(K)$
    has $\End(B_i)$-linearly independent coordinates for all $i$, which is always possible after replacing $K$ with a finite extension.
    Then by Lemma \ref{indcoord}, we find that the set
    \[p_i(\ZZ P)=\ZZ \cdot p_i(P)=\{m\cdot p_i(P)\mid m\in\ZZ\}\]
    is Zariski dense in $A_i$ for all $i$.
    Since the projection of $\ZZ P$ is Zariski dense in $A_i$ for all $i$, we deduce that $\ZZ P$ is Zariski dense in $X$ by Lemma \ref{densityreduction}.

    If all multiples of $P$ are in distinct grand $(f,K)$-orbits, then we may complete $\ZZ P$ to an $(f,K)$-transversal for $X(K)$, which will be Zariski dense in $X$.
    Now suppose that there exists $c\ne d$ such that $cP$ and $dP$ are in the same grand $(f,K)$-orbit, then there exist integers $a,b\ge 0$ such that $cf^a(P)=df^b(P)$, and we claim that an iterate of $f$ restricts to multiplication by an integer on $A_{e,1}$.
    
    Since $X\cong (A_{1,1}\times A_{1,2})\times \cdots \times (A_{n,1}\times A_{n,2})$, we can write $P=(P_{1,1},P_{1,2},\ldots,P_{n,1},P_{n,2})$, where $P_{i,j}\in A_{i,j}(K)$.
    By construction, the point $P$ having image with $\End(B_i)$-linearly independent coordinates in 
    $B_i^{k_i}(K)$ for all $i$ implies that $\Ann(P_{e,1})=0$ in $\End(A_{e,1})$ by Lemma \ref{indcoord}.
    Then projecting to $A_{e,1}$ gives us the equality $c\s_{e,1}^a=d\s_{e,1}^b$ in $\End(A_{e,1})$, where $\s_{e,1}=\t_{e,1}$ is a self-isogeny on $A_{e,1}$ whose eigenvalues are not roots of unity.
    Without loss of generality let $a\le b$, then we can factor the equation into $\s_{e,1}^{a}(c-d\s_{e,1}^{b-a})=0$, which implies that $a<b$ and $c-d\s_{e,1}^{b-a}=0$, because neither $\s_{e,1}^a$ nor $(c-d)$ can equal 0.
    Then we find
    \[\s_{e,1}^{b-a}=\left[\frac{c}{d}\right]\] in $\End^0(A_{i,1})$.
    Since $\s_{e,1}$ satisfies a monic integer polynomial and the eigenvalues of the $\s_{e,1}$ are not roots of unity, the fraction $\frac{c}{d}$ must be an integer not equal to 1, and we write it as $\k=\frac{c}{d}$.
    Now replace $f$ by $f^{b-a}$, which is allowed by Lemma \ref{iterinv}, then $\s_{e,1}=[\k]$, and for any integer $m$, the grand $\s_{e,1}$-orbit of $mP_{e,1}$ consists of the points of the form $\k^jmP_{e,1}$ that are in $A_{e,1}(K)$.
    
    Let $S=\{mP\mid m\in \ZZ,\ \gcd(m, \k)=1\}$, then $S$ is an infinite subset of $\ZZ P$, so it is Zariski dense in $X$ by Lemma \ref{cyclicsubset}.
    Note that all points in $S$ are in distinct grand $(f,K)$-orbits, because the projection of $S$ to $A_{e,1}$, 
    \[\{mP_{e,1}\mid m\in \ZZ,\ \gcd(m, \k)=1\},\]
    consists of points that are in distinct grand $\s_{e,1}$-orbits by unique prime factorization of integers. 
    Therefore we may complete $S$ to an $(f,K)$-transversal for $X(K)$.
    This concludes the proof for the first case.

    In the second case, the component $A_{i,1}$ is trivial for all $i$, and $f_i=\s_{i,2}$ for all $i$.
    Therefore we have $f(x)=\t(x)+y$ on the entire $X$, where $\t=N+\id$ is a unipotent isogeny on $X$ with $N$ being nilpotent and $y\in X(K)$ is a non-torsion point.
    The fact that $y$ is non-torsion comes from the assumption that there exists a point $P_0$ with dense $f$-orbit.
    For any point $x\in X(K)$, we compute:
    \begin{align*}
        f^k(x)=&\f^k(x)+\sum_{i=0}^{k-1}\f^i(y)\\
        =&(\id+N)^k(x)+\sum_{i=0}^{k-1}(\id+N)^i(y)\\
        =&\sum_{j=0}^k{k \choose j}N^j(x)+\sum_{i=0}^{k-1}\left(\sum_{j=0}^i{i \choose j}N^j\right)(y)\\
        =&\sum_{j=0}^k{k \choose j}N^j(x)+\sum_{j=0}^{k-1}\left(\sum_{i=j}^{k-1}{i \choose j}N^j\right)(y)\\
        =&\sum_{j=0}^k{k \choose j}N^j(x)+\sum_{j=0}^{k-1}{k \choose j+1}N^j(y),
    \end{align*}
    which implies that the $f$-orbit of $x$ will lie in a union of translates of $N(X)$:
    \[\Orbit_f(x)\subseteq \bigcup_{i}\bigcup_m (N(X)+N^i(my)).\]
    Since $N$ is nilpotent, the index $i$ takes on finitely many values.
    If $y$ were torsion, then the index $m$ would take on finitely many values as well, and $\Orbit_f(x)$ would be contained in a finite union of translates of a proper abelian subvariety $N(X)$ for all $x$ in $X(K)$, contradicting the existence of a point with a dense orbit.
    
    Now let $P\in X(K)$ be a point such that its image in 
    $B_i^{k_i}(K)$
    has $\End(B_i)$-linearly independent coordinates for all $i$ and that $\{P,y\}$ is $\ZZ$-linearly independent, which is always possible after replacing $K$ with a finite extension.
    By Lemma \ref{indcoord}, the projection of $\ZZ P$ to $A_i$ is dense for all $i$, then by Lemma \ref{densityreduction}, the set $\ZZ P$ is Zariski dense in $X$.
    
    Consider $X(K)$ as a finitely generated $\ZZ$-module and $\t:X(K)\to X(K)$ as a unipotent $\ZZ$-linear map, then by Corollary \ref{unipz}, there exists an infinite subset $S\subseteq \ZZ P$ such that all points in $S$ are in distinct grand $(f,K)$-orbits.
    By Lemma \ref{cyclicsubset}, the set $S$ is infinite and therefore also Zariski dense in $X$, so we may complete $S$ to a Zariski dense $(f,K)$-transversal for $X(K)$, which finishes the proof.
\end{proof}

Lastly, we prove the strong orbit transversality for dominant maps on irreducible varieties over uncountable fields.

\begin{proposition}\label{uncountablefield}
Let $K$ be an uncountable field, $X/K$ an irreducible variety such that $X(K)$ is Zariski dense in $X$, and $f:X\to X$ a dominant morphism.
Then every $(f,K)$-transversal for $X(K)$ is Zariski dense in $X$.
\end{proposition}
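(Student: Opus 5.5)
We argue by contradiction, exploiting the countability of grand orbits. Suppose $S\subseteq X(K)$ is an $(f,K)$-transversal with $Z:=\overline{S}\subsetneq X$ a proper closed subvariety. Every point of $X(K)$ lies in the grand orbit of some $s\in S$. The plan is to show that $X(K)$ is then contained in a countable union of proper closed subvarieties of $X$. We will then contradict the density of $X(K)$ using the uncountability of $K$.

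First we describe a grand orbit in terms of $Z$. If $Q\in\grano_{f,K}(s)$ with $s\in S$, then $f^n(Q)=f^m(s)\in f^m(Z)$ for some $n,m\ge 0$. Hence
\[
X(K)\subseteq\bigcup_{n,m\ge 0}(f^n)^{-1}\bigl(\overline{f^m(Z)}\bigr).
\]
This is a countable union of closed subsets.

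Next we check that each set $(f^n)^{-1}(\overline{f^m(Z)})$ is proper. Since $\dim\overline{f^m(Z)}\le\dim Z<\dim X$, the closure $\overline{f^m(Z)}$ is a proper closed subset. The map $f^n$ is dominant because $f$ is, so its image is dense. A dominant morphism cannot have its whole source mapping into a proper closed subset. Therefore $(f^n)^{-1}(\overline{f^m(Z)})$ is a proper closed subset of the irreducible variety $X$.

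The main obstacle is the last step: showing that $X(K)$ cannot lie in a countable union of proper closed subsets $W_1,W_2,\ldots$ when $K$ is uncountable and $X(K)$ is dense. I would argue by induction on $\dim X$.

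For the base case $\dim X=0$, the variety $X$ is a single point and a proper closed subset is empty, so the union is empty while $X(K)$ is not.

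For the inductive step, pass to a dense open affine $U\subseteq X$; $U(K)$ is still dense. Choose a nonconstant regular function $h$ on $U$ defined over $K$. For $t\in K$, the fibers $F_t=\{h=t\}$ are pairwise disjoint. Only finitely many components of $F_t$ can be contained in any given $W_i$, so for all but countably many $t$, no component of $F_t$ lies in any $W_i$.

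The delicate point is to choose such a $t$ for which $F_t(K)$ is still Zariski dense in some component $C$ of $F_t$. The inductive hypothesis would then apply to $C$ with the proper closed subsets $W_i\cap C$, which cover $C(K)$, giving a contradiction. If the fibers turn out badly behaved, I would first try replacing $h$ by a generic $K$-linear combination of coordinate functions. A cleaner alternative, which I might adopt instead, is the following. Since $X(K)$ is dense, $X$ has a smooth $K$-point, so $X$ is birational to an open subset of a hypersurface. For a generically finite projection to $\mathbb{A}^d$, the image of $X(K)$ is dense in $\mathbb{A}^d$. This reduces the claim to the statement that $K^d$ is not a countable union of proper hypersurfaces, which follows by a straightforward induction on $d$ using the uncountability of $K$.
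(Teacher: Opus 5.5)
Your first two steps are correct and match the paper's setup. The paper, however, takes closures and treats the countable union $\bigcup_{m,n}f^{-m}(\overline{f^n(Z)})$ as if it were closed, so it hides the very issue you isolate. The genuine gap is your third step, and it cannot be filled. For an arbitrary uncountable field, density of $X(K)$ does \emph{not} prevent $X(K)$ from lying in a countable union of proper closed subsets.

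Here is a counterexample. Let $K=\QQ(t_\alpha:\alpha\in I)$ with $I$ uncountable, and let $E/\QQ$ be an elliptic curve of rank one (e.g.\ $y^2+y=x^3-x$). A $K$-point of $E$ is defined over some $\QQ(t_1,\ldots,t_n)$, i.e.\ it is a rational map $\mathbb{P}^n_{\QQ}\dashrightarrow E$, which must be constant. Hence $E(K)=E(\QQ)$ is countably infinite. It is therefore dense in the curve $X=E_K$, yet it is a countable union of points. Now take $f$ to be translation by a point of infinite order. The grand $(f,K)$-orbits are the cosets of a finite-index subgroup of $E(\QQ)$, so every $(f,K)$-transversal is finite and not dense. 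Thus the proposition as stated is false.

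This is exactly where both of your routes break. In the fiber induction, the values $t$ with $F_t(K)\neq\varnothing$ form $h(U(K))$. In this example that set is countable and consists entirely of bad values, since each such $F_t$ contains a $K$-point, which is a component of $F_t$ lying in some $W_i$. So the ``delicate point'' cannot be arranged by any choice of $h$. In the alternative route, knowing that $\pi(X(K))$ is dense in $\mathbb{A}^d$ reduces nothing to $K^d$, because a dense subset of $K^d$ can be countable, as $\pi(X(K))$ is in the example.

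What is missing is a hypothesis that makes $X(K)$ genuinely large, such as $K$ algebraically closed. With that hypothesis your second route goes through with one change. Take a Noether normalization $\pi:U\to\mathbb{A}^d$ of a dense affine open $U\subseteq X$. Since $\pi$ is finite, each $\pi(W_i\cap U)$ is a proper closed subset. Since $K=\overline{K}$, we have $\pi(U(K))=K^d$. Then $K^d$ would be a countable union of proper closed subsets, contradicting your induction on $d$.
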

\begin{proof}
    Let $S\subset X(K)$ be an $(f,K)$-transversal for $X(K)$ and assume for the sake of contradiction that $Z:=\overline{S}\subsetneq X$, i.e., the Zariski closure of $S$ is a proper closed subset of $X$, and hence $\dim(Z)<\dim(X)$.
    Since morphisms cannot increase the dimension of a variety, we get that $\dim(f^n(Z))\le \dim (Z)<\dim(X)$ and $\overline{f^n(Z)}$ is a proper closed subset of $X$ for all $n\ge 0$.
    On the other hand, because $f$ is dominant, the preimage of any proper closed subset is still a proper closed subset.
    Therefore $f^{-m}(\overline{f^n(Z)})$ is a proper closed subset of $X$ for all $m,n\ge 0$.
    Now observe that by definition of $(f,K)$-transversals, we have 
    \[X(K)=\bigcup_{m\ge 0}\bigcup_{n\ge0} f^{-m}(f^n(S)).\]
    Taking the Zariski closure on both sides, we get
    \[X=\overline{X(K)}=\overline{\bigcup_{m\ge 0}\bigcup_{n\ge0} f^{-m}(f^n(S))}\subseteq\bigcup_{m\ge 0}\bigcup_{n\ge0} f^{-m}(\overline{f^n(Z)}),\]
    because the rightmost term is closed and contains $\bigcup_{m\ge 0}\bigcup_{n\ge0} f^{-m}(f^n(S))$.
    This implies that $X$ is a countable union of proper subvarieties of $X$, which is a contradiction, so $\overline{S}=X$.
\end{proof}


\bibliographystyle{amsalpha} 
\bibliography{Citations}

\end{document}